\setlist{leftmargin=*, wide, labelindent=0pt}
\setlist[enumerate]{label*=\rm(\alph*),ref=\alph*}
\numberwithin{equation}{section}
\definecolor{azure(colorwheel)}{rgb}{0.0, 0.5, 1.0}
\definecolor{amber}{rgb}{1.0, 0.49, 0.0}
\crefname{Thm}{Theorem}{Theorems}
\crefname{Rem}{Remark}{Remarks}
\crefname{Prop}{Proposition}{Propositions}
\crefname{Cor}{Corollary}{Corollaries}
\crefname{Cons}{Construction}{Constructions}
\crefname{Exa}{Example}{Examples}
\crefname{Lem}{Lemma}{Lemmas}
\crefname{Rec}{Recollection}{Recollections}
\newtheorem{Cor}[equation]{Corollary}
\newtheorem{Lem}[equation]{Lemma}
\newtheorem{Prop}[equation]{Proposition}
\newtheorem{Thm}[equation]{Theorem}
\theoremstyle{remark}
\newtheorem{Def}[equation]{Definition}
\newtheorem{Not}[equation]{Notation}
\newtheorem{Exa}[equation]{Example}
\newtheorem{Hyp}[equation]{Hypothesis}
\newtheorem{Rem}[equation]{Remark}
\newtheorem{Rec}[equation]{Recollection}
\newtheorem*{Ack}{Acknowledgements}
\newcommand{\nc}{\newcommand}
\nc{\dmo}{\DeclareMathOperator}
\dmo{\con}{con}
\dmo{\cone}{cone}
\dmo{\Der}{D}
\dmo{\DPerm}{DPerm}
\dmo{\ev}{ev}
\dmo{\Hm}{H}
\dmo{\id}{id}
\dmo{\Img}{Im}
\dmo{\Infl}{Infl}
\dmo{\Ker}{Ker}
\dmo{\Res}{Res}
\dmo{\SH}{SH}
\dmo{\smallb}{b}
\dmo{\smallperf}{perf}
\dmo{\Spc}{Spc}
\dmo{\Spec}{Spec}
\dmo{\Spech}{\Spec^{h}}
\dmo{\subname}{Sub}
\dmo{\supp}{supp}
\nc{\adj}{\dashv}
\nc{\aka}{{a.\,k.\,a.}\ }
\nc{\calU}{\mathcal{U}}
\nc{\cat}[1]{\mathscr{#1}}
\nc{\cC}{\cat{C}}
\nc{\cJ}{\cat{J}}
\nc{\cK}{\cat{K}}
\nc{\cL}{\cat{L}}
\nc{\colim}{\mathop{\mathrm{colim}}}
\nc{\cP}{\cat{P}}
\nc{\cT}{\cat{T}}
\nc{\Db}{\Der_{\smallb}}
\nc{\Dperf}{\Der_{\smallperf}}
\nc{\eg}{{\sl e.g.}\@\xspace}
\nc{\gm}{\mathfrak{m}}
\nc{\gp}{\mathfrak{p}}
\nc{\ideal}[1]{\langle #1\rangle}
\nc{\ie}{{\sl i.e.}\@\xspace}
\nc{\ihom}{{\mathsf{hom}}} 
\nc{\inv}{^{-1}}
\nc{\isoto}{\overset{\sim}{\,\to\,}}
\nc{\loccit}{{\sl loc.\ cit.}\xspace}
\nc{\Mid}{\,\big|\,}
\nc{\normaleq}{\trianglelefteqslant}
\nc{\potimes}[1]{^{\otimes #1}}
\nc{\qcO}{\mathcal{QO}}
\nc{\SET}[2]{\big\{\,#1\Mid#2\,\big\}}
\nc{\sminus}{\smallsetminus}
\nc{\SpcK}{\Spc(\cK)}
\nc{\SpSp}{\mathsf{Spec}}
\nc{\To}{\Rightarrow}
\nc{\too}{\mathop{\longrightarrow}\limits}
\nc{\unit}{\mathbb{1}}
\nc{\WGH}{{G}/\!\!/{H}}
\nc{\wX}{\overline{X}{}^{\calU}}
\nc{\xto}[1]{\xrightarrow{#1}}
\date{2025 March 19}
\author{Paul Balmer}
\address{Paul Balmer, UCLA Mathematics Department, Los Angeles, CA 90095, USA}
\email{balmer@math.ucla.edu}
\urladdr{https://www.math.ucla.edu/~balmer}
\author{Martin Gallauer}
\address{Martin Gallauer, Mathematics Institute, University of Warwick}
\email{martin.gallauer@warwick.ac.uk}
\urladdr{https://mgallauer.warwick.ac.uk}
\begin{document}


\title{Patch-density in tensor-triangular geometry}

\begin{abstract}
The spectrum of a tensor-triangulated category carries a compact Hausdorff topology, called the constructible topology, also known as the patch topology.
We prove that patch-dense subsets detect tt-ideals and we prove that any infinite family of tt-functors that detects nilpotence provides such a patch-dense subset.
We review several applications and examples in algebra, in topology and in the representation theory of profinite groups.
\end{abstract}

\subjclass[2020]{18F99}
\keywords{constructible topology, patch topology, tensor-triangular geometry, density, detection of nilpotence}

\maketitle

\section{Introduction}
\label{sec:intro}


In this short note we sharpen our understanding of two fundamental themes of tensor-triangular geometry: the classification of thick tensor-ideals and the detection of tensor-nilpotence.
The origins of the subject can be traced back to the Nilpotence Theorem of Devinatz--Hopkins--Smith~\cite{devinatz-hopkins-smith:chromatic} in topology.
Using Morava $K$-theories~$K(n)$ at a prime~$p$, they prove that a morphism~$f\colon k\to L$ in the $p$-local stable homotopy category~$\SH_{(p)}$, with finite source~$k$, must be $\otimes$-nilpotent if $K(n)(f)=0$ for all~$0\le n\le \infty$; here, $K(\infty)$ means mod-$p$ homology.
This theorem led to a classification of the thick subcategories of finite $p$-local spectra in~\cite{hopkins-smith:chromatic}, as being exactly the so-called `chromatic' tower:
\begin{equation}
\label{eq:SH-classification}%
\SH^c_{(p)}=\cC_0\supsetneq \cC_1\supsetneq\cdots\supsetneq\ \cC_n=\Ker\big(K(n-1)_*\big)\ \supsetneq\cdots\supsetneq\cC_\infty=0
\end{equation}
Already in this initial example, we can point to the germ of what we wish to discuss.
On the one hand, in the Nilpotence Theorem, if the morphism $f\colon k\to \ell$ also has finite target~$\ell$ then we do not need to know that $K(\infty)(f)=0$ to conclude that $f$ is $\otimes$-nilpotent.
On the other hand, in the chromatic tower~\eqref{eq:SH-classification}, the smallest subcategory~$\cC_{\infty}$ is not really `seen' by any finite object: If $k\in\SH^c_{(p)}$ belongs to all~$\cC_n$ for~$n<\infty$ then it belongs to $\cC_{\infty}$ as well. In other words, there is no finite object that has infinite chromatic level.
In both cases, the `stuff at~$\infty$' seems somewhat irrelevant.
To explain the parallels between these two phenomena let us remind the reader of some elementary tt-geometry.

Let $\cK$ be an essentially small rigid tt-category, such as $\SH^c_{(p)}$ above.
To this, we can associate a space~$\Spc(\cK)$, called the spectrum, that affords the universal support theory $\supp(k)\subseteq\SpcK$ for~$\cK$, see~\cite{balmer:spectrum}.
A support theory for~$\cK$ on a space~$X$ consists of closed subsets $\sigma(k)\subseteq X$ for each object $k\in \cK$ that behave in a predictable manner as one operates on~$k$ through the tensor triangulated structure.
The supports in $\SpcK$ induce a classification of thick tensor-ideals:
\[
\{\,\text{tt-ideals in }\cK\,\}\xrightarrow[\sim]{\ \supp\ }\{\,\text{Thomason subsets of }\Spc(\cK)\,\}.
\]
Being universal means that any support theory $(X,\sigma)$ is classified by a continuous map $\phi\colon X\to\Spc(\cK)$ such that $\sigma(k)=\phi\inv(\supp(k))$ for all $k\in\cK$.
Of particular importance among all support theories~$(X,\sigma)$ are those that \emph{distinguish supports}, in the sense that $\sigma(k)=\sigma(\ell)$ forces $\supp(k)=\supp(\ell)$.
This means that the classifying map $\phi \colon X\to \SpcK$ might not be a homeomorphism but the supports in~$X$ are a fine enough invariant to distinguish tt-ideals.
Let us rephrase this property using point-set topology.

Recall that the \emph{constructible} topology on~$\SpcK$ -- \aka the \emph{patch} topology -- is generated by the quasi-compact opens and their complements. (\Cref{Rec:patch-topology}.)
It is easy to see that a support theory $(X,\sigma)$ distinguishes supports if and only if the image of the classifying map $\phi\colon X\to\Spc(\cK)$ is \emph{patch-dense}, \ie dense for the constructible topology.
For example, any patch-dense subset $X\subseteq \Spc(\cK)$ equipped with $\sigma(k):=X\cap\supp(k)$ distinguishes supports.
We can now state the main result of this paper, proved in \Cref{sec:detection}:
\begin{Thm}
\label{Thm:intro}
Let $\cK$ be an essentially small rigid tt-category and consider a family $\{F_i\colon \cK\to \cL_i\}_{i\in I}$ of tt-functors.
The following are equivalent:
\begin{enumerate}[label=\rm(\roman*), ref=\rm(\roman*)]
\item The tt-functors $F_i\colon \cK\to\cL_i$ jointly detect $\otimes$-nilpotence of morphisms that are $\otimes$-nilpotent on their cones.
\smallbreak
\item The subset $\cup_{i}\Img(\Spc(F_i))\subseteq\SpcK$ is patch-dense.
\smallbreak
\item[\rm(ii')]The maps $\Spc(F_i)\colon \Spc(\cL_i)\to\Spc(\cK)$ jointly distinguish supports.
\end{enumerate}
\end{Thm}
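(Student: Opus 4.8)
The plan is to factor the statement through a point‑set dictionary, which delivers $(ii)\Leftrightarrow(ii')$ at once, and through one elementary lemma on $\otimes$‑nilpotence, which delivers $(i)\Leftrightarrow(ii')$. For the dictionary, recall that a basis of the constructible topology on $\SpcK$ is given by the sets $\{\gp\mid a\in\gp,\ b\notin\gp\}$ for $a,b\in\cK$, and that such a basic open is nonempty precisely when $b\notin\ideal{a}$ (using that a rigid tt‑category has only radical tt‑ideals, so $\ideal{a}=\{x\mid\supp(x)\subseteq\supp(a)\}$). A prime of the form $\Spc(F_i)(\gq)$ lies in $\{\gp\mid a\in\gp,\ b\notin\gp\}$ exactly when $F_i(a)\in\gq$ and $F_i(b)\notin\gq$, so $\bigcup_i\Img(\Spc(F_i))$ is patch‑dense if and only if, for all $a,b\in\cK$ with $b\notin\ideal{a}$, some $i$ has $F_i(b)\notin\ideal{F_i(a)}$ in $\cL_i$ (each $F_i(a)$ is again rigid, so $\ideal{F_i(a)}$ is radical too). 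This last condition is exactly the contrapositive of ``the $\Spc(F_i)$ jointly distinguish supports'', giving $(ii)\Leftrightarrow(ii')$; one may instead invoke the universal property of $\SpcK$ recalled above. So it remains to prove $(i)\Leftrightarrow(ii')$.

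The lemma I would isolate is: \emph{if $f\colon x\to y$ is $\otimes$‑nilpotent on its cone}, meaning $f\otimes\id_{\cone(f)}$ is $\otimes$‑nilpotent, \emph{then $f$ is $\otimes$‑nilpotent if and only if $x\in\ideal{\cone(f)}$.} The forward implication needs no hypothesis: $f\potimes{n}=0$ gives $\cone(f\potimes{n})\cong y\potimes{n}\oplus\Sigma x\potimes{n}$, while an octahedral induction on $f\potimes{n}=(f\otimes\id)\circ(\id\otimes f\potimes{(n-1)})$ shows $\cone(f\potimes{n})\in\ideal{\cone(f)}$, so $x\potimes{n}$, hence $x$, lies in $\ideal{\cone(f)}$. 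For the converse I would use the standard dévissage: if $g\otimes\id_{c}=0$ then $g\potimes{j}\otimes\id_{z}=0$ for every $z\in\ideal{c}$ (with $j$ depending on $z$), since these $\otimes$‑nilpotence orders add along distinguished triangles and are insensitive to retracts and to tensoring; applying it with $c=\cone(f)\potimes{m}$ (for $m$ with $f\potimes{m}\otimes\id_{\cone(f)\potimes{m}}=0$), $g=f\potimes{m}$ and $z=x\in\ideal{\cone(f)}=\ideal{c}$ yields $f\potimes{mj}\otimes\id_{x}=0$, and then $f\potimes{(mj+1)}=(\id\otimes f)\circ(f\potimes{mj}\otimes\id_{x})=0$. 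Since a tt‑functor commutes with cones and with $\otimes$, the hypothesis and conclusion of the lemma both pass to each $F_i(f)$.

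Granting this, $(ii')\Rightarrow(i)$ is short: if $f$ is $\otimes$‑nilpotent on its cone and all $F_i(f)$ are $\otimes$‑nilpotent, the lemma in $\cL_i$ gives $F_i(x)\in\ideal{F_i(\cone(f))}$, i.e.\ $\Spc(F_i)\inv(\supp(x))\subseteq\Spc(F_i)\inv(\supp(\cone(f)))$ for all $i$; hence $\supp(x\oplus\cone(f))$ and $\supp(\cone(f))$ have equal preimage under every $\Spc(F_i)$, so $(ii')$ forces $\supp(x\oplus\cone(f))=\supp(\cone(f))$, i.e.\ $x\in\ideal{\cone(f)}$, and the lemma in $\cK$ makes $f$ $\otimes$‑nilpotent.

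The converse $(i)\Rightarrow(ii')$ I would prove contrapositively, and the required construction is where I expect the main difficulty. Given $a,b\in\cK$ with $b\notin\ideal{a}$ but $F_i(b)\in\ideal{F_i(a)}$ for all $i$, take $f:=\gamma_a\otimes\id_b$, where $\gamma_a\colon\unit\to C_a$ occurs in a distinguished triangle $a^{\vee}\otimes a\xto{\ev}\unit\xto{\gamma_a}C_a\to\Sigma(a^{\vee}\otimes a)$ completing the evaluation of $a$. Rotating identifies $\cone(\gamma_a)\cong\Sigma(a^{\vee}\otimes a)$, so $\cone(f)\cong\Sigma(a^{\vee}\otimes a\otimes b)$ and $\ideal{\cone(f)}=\ideal{a^{\vee}\otimes a\otimes b}=\ideal{a\otimes b}$; tensoring the triangle on the left with $a$, a zig‑zag identity for the dual pair $(a,a^{\vee})$ exhibits $\id_a\otimes\ev$ as a split epimorphism, and a split epimorphism in a distinguished triangle is followed by the zero map, so $\id_a\otimes\gamma_a=0$, whence $\gamma_a\otimes\id_{a^{\vee}\otimes a}=0$ and therefore $f\otimes\id_{\cone(f)}=0$: thus $f$ is (trivially) $\otimes$‑nilpotent on its cone. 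By the lemma, $f$ is $\otimes$‑nilpotent iff $b\in\ideal{a\otimes b}$, iff $\supp(b)\subseteq\supp(a)$, iff $b\in\ideal{a}$ — which is false, so $f$ is not $\otimes$‑nilpotent. Finally a tt‑functor carries this whole construction for $a$ to the same one for $F_i(a)$, so $F_i(f)=\gamma_{F_i(a)}\otimes\id_{F_i(b)}$ is again $\otimes$‑nilpotent on its cone and, by the lemma, $\otimes$‑nilpotent exactly because $F_i(b)\in\ideal{F_i(a)}$. Hence $f$ contradicts $(i)$. The delicate point is precisely the morphism $\gamma_a$: that evaluation, though far from split, splits after tensoring with $a$, so that $\gamma_a$ annihilates its own cone — this is the step I would check most carefully.
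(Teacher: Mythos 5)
Your proof is correct and follows essentially the same route as the paper: the patch-topology dictionary gives (ii)$\Leftrightarrow$(ii'), and for (i)$\Leftrightarrow$(ii') you use the test morphism built from the duality triangle of $a$ tensored with $b$ (your cofibre $\gamma_a$ of the evaluation is the dual, equivalent variant of the paper's fibre $\xi_k$ of the coevaluation), together with the standard nilpotence d\'evissage, which you make self-contained where the paper cites \cite[Proposition~2.10]{balmer:surjectivity}. One small nitpick on justification only: the radicality you invoke in $\cL_i$ comes from rigidity of the tested object $F_i(b)$ (a summand of $F_i(b)\otimes F_i(b)^\vee\otimes F_i(b)$), not from rigidity of the generator $F_i(a)$.
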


For a single tt-functor $F\colon \cK\to\cL$ this theorem recovers the surjectivity result of~\cite{balmer:surjectivity}; see \Cref{Rmk:surjective-singleton-family}.
Our proof is a generalization of that proof.

Let us go back to the example at the start of this introduction.
The classification in the chromatic tower~\eqref{eq:SH-classification} translates
into $\Spc(\SH^c_{(p)})$ being the space
\begin{equation}
\label{eq:Spc-SH}%
\cC_1\leadsto\cC_2\leadsto\cdots\leadsto\cC_{n}\leadsto\cC_{n+1}\leadsto\cdots\leadsto\cC_\infty
\end{equation}
in which the closed subsets are precisely the subsets closed under specialization ($\leadsto$ going towards the right in the above picture).
The observations made earlier about the irrelevance of $K(\infty)$ and of~$\cC_{\infty}$ are equivalent to the patch-density of the complement $X=\Spc(\SH^c_{(p)})\sminus\{\cC_{\infty}\}=\SET{\cC_{n}}{1\le n<\infty}$ of the closed point at infinity.
In fact, this $X$ is the only patch-dense proper subset of~$\Spc(\SH^c_{(p)})$.

\smallbreak
A second goal of this article is to identify situations where patch-density occurs.
We emphasize the following implication that is particularly interesting:
\begin{Cor}
If $\cK$ is rigid and a family of tt-functors $\{F_i\colon\cK\to\cL_i\}_{i\in I}$ jointly detects $\otimes$-nilpotence then the subset $\cup_i\Img(\Spc(F_i))\subseteq\Spc(\cK)$ is patch-dense.
\end{Cor}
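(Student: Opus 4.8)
The plan is to deduce this directly from \Cref{Thm:intro}, using only the implication $(i)\Rightarrow(ii)$. The hypothesis of the corollary is that the family $\{F_i\}_{i\in I}$ jointly detects $\otimes$-nilpotence of \emph{every} morphism of $\cK$: whenever $F_i(f)$ is $\otimes$-nilpotent for all $i\in I$, the morphism $f$ is itself $\otimes$-nilpotent. Condition~$(i)$ of \Cref{Thm:intro} is the very same assertion with $f$ restricted to range over the subclass of morphisms that are $\otimes$-nilpotent on their cones; it is therefore a logical weakening of the hypothesis and holds a fortiori. \Cref{Thm:intro} then yields that $\cup_i\Img(\Spc(F_i))\subseteq\SpcK$ is patch-dense, as claimed. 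The only thing one has to observe is that ``detecting $\otimes$-nilpotence'' is stronger than the detection property that powers \Cref{Thm:intro}; this is immediate, so there is no genuine obstacle here — all the content sits in \Cref{Thm:intro} itself.

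It is nevertheless worth recording why the conclusion is substantial. When the index set $I$ is \emph{finite}, each $\Img(\Spc(F_i))$ is patch-closed in $\SpcK$, being the image of a spectral map between spectral spaces; hence the finite union $\cup_i\Img(\Spc(F_i))$ is already patch-closed, and patch-density then forces it to be all of $\SpcK$, so in that case the corollary only reproduces the joint-surjectivity statement of~\cite{balmer:surjectivity} (applied to the family $\{F_i\}_{i\in I}$). For an \emph{infinite} family the union of the images need not be patch-closed, so patch-density is strictly weaker than joint surjectivity, and it is exactly this weaker conclusion that the corollary extracts from the detection hypothesis. This is the phenomenon of the chromatic example in the introduction: the Morava $K$-theories $\{K(n)\}_{0\le n<\infty}$ jointly detect $\otimes$-nilpotence in $\SH^c_{(p)}$, and the corresponding union of images is the patch-dense but proper — hence non-surjective — subset $\SET{\cC_n}{1\le n<\infty}$ of $\Spc(\SH^c_{(p)})$.
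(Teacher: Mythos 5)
Your proof is correct and takes essentially the same route as the paper, which offers no separate argument for this corollary precisely because it is the immediate specialization you describe: joint detection of $\otimes$-nilpotence for all morphisms trivially implies it for the subclass of morphisms that are $\otimes$-nilpotent on their cones, and then the implication (i)$\Rightarrow$(ii) of \Cref{Thm:intro} gives patch-density of $\cup_i\Img(\Spc(F_i))$. Your supplementary remarks are also consistent with the paper (patch-closedness of the image of a spectral map as in \Cref{Rmk:surjective-singleton-family}, and the chromatic picture of \Cref{Exa:SH}), so there is nothing to repair.
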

There are numerous instances in the recent literature where the space $\Spc(\cK)$ is difficult to describe explicitly but where it is much easier to describe some natural patch-dense subset.
We explain in \Cref{Thm:reconstruction} how to recover $\SpcK$ from such a patch-dense subset~$X$ together with the restricted support theory~$\supp_{X}$ on~$X$, defined by $\supp_{X}(k)=X\cap \supp(k)$ for all~$k\in\cK$.
Several examples of these phenomena will be given in \Cref{sec:examples}.
We also point out the recent paper by G\'omez~\cite{Gomez:fiberwise} where families of tt-functors as above are used to study stratification of `big' tt-categories.

\begin{Ack}
We thank Scott Balchin and Tobias Barthel for interesting discussions during the preparation of this work.
\end{Ack}

\pagebreak


\section{Patch-density}
\label{sec:density}

\begin{Not}
We denote by $\SpSp$ the category of spectral spaces in the sense of Hochster~\cite{hochster:prime-ideal-structure} with continuous spectral maps as morphisms.
\end{Not}

\begin{Rec}
\label{Rec:patch-topology}
Let $X$ be a spectral space, like for instance~$X=\SpcK$.
The collection $\qcO(X)$ of all quasi-compact open subsets is an open basis of~$X$.
Their complements form an open (!) basis of the so-called dual topology~$X^*$.
We also speak of \emph{Thomason subset} for an open in~$X^*$.
The smallest topology on the set~$X$ that contains both the original and the dual topology is called the \emph{constructible} (or \emph{patch}) topology, and is denoted~$X_{\con}$.
It is generated by quasi-compact opens in~$X$ and their complements, and is always a Boolean space (hence compact Hausdorff).
To be precise, a subset of~$X$ is called \emph{constructible} if it can be obtained by finite union and finite intersection from the quasi-compact opens of~$X$ and their complements.
A constructible-open is an arbitrary union of constructibles; a constructible-closed (or \emph{proconstructible}) is an arbitrary intersection of constructibles.
For more on this see~\cite[\S\,1.3]{dst:spectral-spaces}.
Note that a subset $D\subset X$ is \emph{patch-dense}, \ie dense in~$X_{\con}$, if and only if $D$ meets every non-empty constructible in~$X$.
Our textbook reference~\cite{dst:spectral-spaces} uses `patch dense' and `constructibly dense' interchangeably.
\end{Rec}

\begin{Rem}
\label{Rem:cons-SpcK}%
Let $\cK$ be an essentially small rigid tt-category and consider the spectral space $\Spc(\cK)=\{\cP\subsetneq\cK\textrm{ prime tt-ideal}\}$.
Each object $k\in \cK$ comes with a closed subset $\supp(k)=\SET{\cP}{k\notin\cP}$.
Its complement $U(k)=\supp(k)^c$ is quasi-compact, and all quasi-compact opens are of this form.
See~\cite{balmer:spectrum}.

Let us write $C(k,\ell)=\supp(k)\cap \supp(\ell)^c=\SET{\cP\in\SpcK}{k\notin \cP\textrm{ and }\ell\in \cP}$ for every~$k,\ell\in\cK$.
It follows from $C(k,\ell)\cap C(k',\ell')=C(k\otimes k',\ell\oplus \ell')$ that these constructible subsets $C(k,\ell)$ form a basis of the constructible topology on~$\SpcK$.
Thus a subset $D\subseteq \SpcK$ is patch-dense if and only if $D$ meets $C(k,\ell)$ for every $k,\ell\in\cK$ such that $\supp(k)\not\subseteq\supp(\ell)$.
\end{Rem}

\begin{Def}
\label{Def:separate-supports}%
A family of maps $\phi_i\colon Y_i\to \Spc(\cK)$ is said to \emph{(jointly) distinguish supports} if the following implication holds, for any $k,\ell\in\cK$:
\[
\phi_i\inv(\supp(k))=\phi_i\inv(\supp(\ell)) \text{ for all i} \quad \Rightarrow \quad \supp(k)=\supp(\ell).
\]
In view of \Cref{Rem:cons-SpcK}, this is equivalent to the subset~$\cup_i\Img(\phi_i)$ intersecting non-trivially every non-empty $C(k,\ell)$, that is, to the purely topological condition that the subset~$\cup_i\Img(\phi_i)$ be patch-dense in~$\Spc(\cK)$.
\end{Def}

\begin{Exa}
Let $D\subseteq \SpcK$ be a subspace. Consider on~$D$ the restricted support theory for~$\cK$, defined by~$\supp_D(k)=D\cap \supp(k)$ for every object~$k\in\cK$.
Then $D$ is patch-dense if and only if the map $\{\cJ\subseteq\cK\textrm{ radical tt-ideal}\}\too \textrm{Subsets}(D)$ mapping~$\cJ$ to $\cup_{k\in\cJ}\supp_D(k)$ is injective. (Recall that $\cJ$ radical means~$k\potimes{s}\in\cJ\Rightarrow k\in\cJ$; this condition is automatic if~$\cK$ is rigid.)
Indeed, distinguishing supports means distinguishing principal (radical) tt-ideals~$\ideal{k}$, and the latter characterize all (radical) tt-ideals since $\cJ=\cup_{k\in\cJ}\ideal{k}$.
\end{Exa}

\begin{Lem}
\label{Lem:dense-epi}%
Let $X$ be a spectral space. Let $D$ be a set and $i\colon D\to X$ a function (\eg the inclusion of a subset). Then the following are equivalent:
\begin{enumerate}[label=\rm(\roman*), ref=\rm(\roman*)]
  \item\label{it:dense-epi.dense} The image $i(D)$ is patch-dense in~$X$.
  \item\label{it:dense-epi.UV} For every quasi-compact open~$U,V\subseteq X$, if $i\inv(U)\subseteq i\inv(V)$ then $U\subseteq V$.
  \item\label{it:dense-epi.epi} For every pair of spectral maps $\alpha,\beta\colon X\to Z$ to a spectral space~$Z$, if $\alpha\,i=\beta\,i$ then $\alpha=\beta$.
\end{enumerate}
\end{Lem}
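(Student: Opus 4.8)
The plan is to prove the cycle $\ref{it:dense-epi.dense}\Rightarrow\ref{it:dense-epi.UV}\Rightarrow\ref{it:dense-epi.epi}\Rightarrow\ref{it:dense-epi.dense}$. For $\ref{it:dense-epi.dense}\Rightarrow\ref{it:dense-epi.UV}$: suppose $i(D)$ is patch-dense and $U\not\subseteq V$ for quasi-compact opens $U,V$. Then $U\sminus V = U\cap(X\sminus V)$ is a non-empty constructible subset of $X$, so by \Cref{Rec:patch-topology} the dense set $i(D)$ meets it; picking $d\in D$ with $i(d)\in U\sminus V$ gives $d\in i\inv(U)$ but $d\notin i\inv(V)$, so $i\inv(U)\not\subseteq i\inv(V)$. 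This is the contrapositive of what we want. This step is essentially a reformulation of patch-density and should be quick.

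For $\ref{it:dense-epi.UV}\Rightarrow\ref{it:dense-epi.epi}$: let $\alpha,\beta\colon X\to Z$ be spectral maps with $\alpha i=\beta i$. To show $\alpha=\beta$ it suffices to check that $\alpha\inv(W)=\beta\inv(W)$ for every quasi-compact open $W\subseteq Z$, since such $W$ form a basis of $Z$ and a continuous map is determined by the preimages of a basis (a point-separation argument: if $\alpha(x)\neq\beta(x)$, as $Z$ is spectral hence $T_0$, some quasi-compact open $W$ contains exactly one of $\alpha(x),\beta(x)$). Now $\alpha\inv(W)$ and $\beta\inv(W)$ are quasi-compact opens of $X$ (spectral maps pull back quasi-compact opens to quasi-compact opens), and by hypothesis $\alpha i = \beta i$ gives $i\inv(\alpha\inv W) = i\inv(\beta\inv W)$. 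Applying \ref{it:dense-epi.UV} in both directions yields $\alpha\inv(W)=\beta\inv(W)$, hence $\alpha=\beta$.

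For $\ref{it:dense-epi.epi}\Rightarrow\ref{it:dense-epi.dense}$: this is the step I expect to be the main obstacle, and the natural approach is to use the (Stone/patch) universal property exhibiting the patch closure of $i(D)$ as an equalizer. Concretely, let $X_{\con}$ be the patch space; it is a Boolean space, and for any spectral space $X$ the identity $X_{\con}\to X$ is spectral. Suppose $i(D)$ is \emph{not} patch-dense, so there is a non-empty constructible $C\subseteq X$ with $i(D)\cap C=\emptyset$. Using that $C$ is constructible (a finite boolean combination of quasi-compact opens), one builds a spectral space $Z$ and two distinct spectral maps $\alpha,\beta\colon X\to Z$ agreeing exactly on $X\sminus C$ — for instance, take $Z$ to be the space obtained by "doubling" the constructible piece, or more cleanly: let $Z = X \sqcup_{X\sminus C} X$ be the pushout in $\SpSp$ of two copies of $X$ glued along the proconstructible complement $X\sminus C$, with $\alpha,\beta$ the two structure maps. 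One checks $Z$ is again spectral (gluing spectral spaces along a closed-for-the-patch, i.e. proconstructible, subset stays within $\SpSp$) and that $\alpha\neq\beta$ since $C\neq\emptyset$, yet $\alpha i = \beta i$ because $i(D)\subseteq X\sminus C$. This contradicts \ref{it:dense-epi.epi}. The delicate point is verifying that the pushout exists in $\SpSp$ and is computed on underlying sets as the set-theoretic pushout — this rests on Hochster duality or the explicit description of spectral spaces as pro-objects in finite posets, and care is needed because colimits in $\SpSp$ are not generally the naive ones; gluing along a proconstructible subset is the favourable case that makes it work.
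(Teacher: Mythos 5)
Your steps \ref{it:dense-epi.dense}$\Rightarrow$\ref{it:dense-epi.UV} and \ref{it:dense-epi.UV}$\Rightarrow$\ref{it:dense-epi.epi} are correct and complete (the latter is a mild variant of the paper, which instead deduces \ref{it:dense-epi.epi} from \ref{it:dense-epi.dense} by observing that spectral maps are continuous for the patch topologies, which are Hausdorff). The gap is in \ref{it:dense-epi.epi}$\Rightarrow$\ref{it:dense-epi.dense}. Knowing that the pushout $Z=X\sqcup_{X\sminus C}X$ exists in $\SpSp$ (it does, e.g.\ since $\SpSp$ is pro-(finite posets)) only gives you two structure maps $\alpha,\beta\colon X\to Z$ with $\alpha|_{X\sminus C}=\beta|_{X\sminus C}$; it gives you no control on whether $\alpha\neq\beta$. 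Indeed, $\alpha\neq\beta$ for the pushout is \emph{equivalent} to the existence of some spectral space $W$ with two distinct spectral maps $X\to W$ agreeing on $X\sminus C$, which is exactly what you are trying to produce -- so the pushout formalism is circular here unless you prove your auxiliary claim that the set-theoretic double along a proconstructible subset is spectral and computes the pushout. That claim is precisely the ``delicate point'' you flag and leave unproven (and it is genuinely nontrivial: one has to check, for instance, that pairs of quasi-compact opens with equal trace on $X\sminus C$ form a basis of the quotient topology), so as written the implication closing your cycle is not established.

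The repair is much cheaper and is what the paper does. If $i(D)$ is not patch-dense it misses a non-empty \emph{basic} constructible $U\cap V^c$ with $U,V\in\qcO(X)$ (your $C$ is a finite union of such sets, so one of them is non-empty and still disjoint from $i(D)$). Now take the Sierpi\'nski space $Z=\{0\leadsto 1\}$ with $1$ closed, and let $\alpha$ send $U\cup V$ to $0$ and the rest to $1$, while $\beta$ sends $V$ to $0$ and the rest to $1$. Both are spectral, they differ exactly on $U\cap V^c\neq\varnothing$, and they agree on $i(D)$ since $i(D)\cap(U\cap V^c)=\varnothing$; this contradicts \ref{it:dense-epi.epi}. (The paper phrases this as \ref{it:dense-epi.epi}$\Rightarrow$\ref{it:dense-epi.UV}, noting that \ref{it:dense-epi.UV} is just a reformulation of \ref{it:dense-epi.dense}, but it is the same two-line construction.) Replacing your pushout argument by this one makes your proof complete without any appeal to colimits in $\SpSp$.
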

\begin{proof}
Note that \ref{it:dense-epi.UV} is simply a reformulation of patch-density~\ref{it:dense-epi.dense}: The constructible subsets $U\cap V^c$ form a basis of the constructible topology and \ref{it:dense-epi.UV} says that such $U\cap V^c$ can only avoid~$i(D)$ when $U\cap V^c$ is empty.

\ref{it:dense-epi.dense}$\Rightarrow$\ref{it:dense-epi.epi} is easy. The spectral maps $\alpha,\beta\colon X\to Z$ are also continuous for the constructible topologies, which are Hausdorff. Hence if $\alpha$ and~$\beta$ agree on the dense subset $i(D)$ they are equal.

For \ref{it:dense-epi.epi}$\Rightarrow$\ref{it:dense-epi.UV}, let $U,V\subseteq X$ be two quasi-compact open such that $U\not\subseteq V$ and let us show that $i\inv(U)\not\subseteq i\inv(V)$.
Consider the Sierpi\'{n}ski space $Z=\{0\leadsto 1\}$ with $1$ the closed point and define maps $\alpha,\beta\colon X\to Z$ by asking that $\alpha$ sends $U\cup V$ to $0$ and the rest to~$1$, whereas $\beta$ sends $V$ to~$0$ and the rest to~$1$. These are continuous spectral maps.
The maps $\alpha$ and $\beta$ are different on the subset~$U\cap V^c\neq\varnothing$. By~\ref{it:dense-epi.epi}, $\alpha\,i(x)\neq\beta\,i(x)$ for some~$x\in D$. Hence the element~$i(x)$ belongs to $U\cap V^c$, showing that~$i\inv(U)\ni x$ is not contained in~$i\inv(V)\not\ni x$.
\end{proof}

We can reconstruct a spectral space from a patch-dense subset together with the trace of the quasi-compact opens of the ambient space.
\begin{Hyp}
\label{Hyp:XU}%
Let $X$ be a set with a chosen non-empty collection $\calU$ of subsets, closed under finite intersections and finite unions. (In particular, $\calU$ contains~$X$ and~$\varnothing$.)
\end{Hyp}

Let us say that a map $f\colon X\to X'$ to a spectral space~$X'$ is \emph{spectral} (with respect to~$\calU$) if $f\inv(U)\in\calU$ for every quasi-compact open~$U\in\qcO(X')$.
Note that if $X$ is spectral and~$\calU=\qcO(X)$, then $f$ being spectral is the usual definition (including continuity).
The \emph{spectral closure}~$\wX$ of~$X$ with respect to~$\calU$ is the initial spectral map~$f\colon X\to \wX$ to a spectral space~$\wX$; in other words, any other spectral map~$f'\colon X\to X'$ factors as $f'=\bar{f}'\circ f$ for a unique spectral map~$\bar{f}'\colon \wX\to X'$.
\begin{Thm}
\label{Thm:reconstruction}%
Let $(X,\calU)$ be a pair as in \Cref{Hyp:XU}.
\begin{enumerate}
\item
\label{it:recons-a}%
The spectral closure $\wX$ of~$X$ with respect to~$\calU$ does exist (and is unique up to unique homeomorphism compatible with $X\to \wX$, as usual).
\smallbreak
\item
\label{it:recons-b}%
Let $i\colon X\to X'$ be a (set-)\,function to a spectral space~$X'$ (\eg\ an inclusion) and suppose that $\calU=\SET{i\inv(U)}{U\in\qcO(X')}$ is exactly the `restriction' to~$X$ of the quasi-compact opens of~$X'$. Then the spectral closure $\wX$ with respect to~$\calU$ can be realized as the constructible-closure $\overline{\Img(i)}^{\con}$ of the image of~$X$ in~$X'$.
\end{enumerate}
\end{Thm}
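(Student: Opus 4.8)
The plan is to construct $\wX$ explicitly via the constructible topology and then verify the universal property, treating part \ref{it:recons-a} and part \ref{it:recons-b} together: I would first produce a canonical spectral target out of $(X,\calU)$ and identify it with $\overline{\Img(i)}^{\con}$ in the situation of \ref{it:recons-b}. Concretely, for \ref{it:recons-a} I would take the Boolean algebra $B$ of subsets of $X$ generated by $\calU$ under finite unions, intersections, and complements, and let $\wX := \Spec(B)$ be its Stone space, equipped with the spectral topology whose quasi-compact opens are generated by the images of the members of $\calU$. (Equivalently: build the frame freely generated by the poset $\calU$ subject to the relations recording the finite meets and joins already present in $\calU$, and take its space of points; the bounded distributive lattice $\calU$ presents a spectral space by Stone--Priestley duality, cf.\ \cite[\S1.3]{dst:spectral-spaces}.) The structure map $f\colon X\to\wX$ sends $x$ to the ultrafilter (equivalently, completely prime filter) $\{U\in\calU : x\in U\}$.

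The key steps are then: (1) check $f$ is spectral, i.e.\ $f\inv$ of the basic quasi-compact open attached to $U\in\calU$ is exactly $U$ — this is immediate from the definition of $f$; (2) verify the universal property: given any spectral map $f'\colon X\to X'$, the assignment $U' \mapsto$ (the element of the presenting lattice $\calU$ equal to $f'^{-1}(U')$) is a morphism of bounded distributive lattices $\qcO(X')\to\calU$, hence by duality induces a spectral map $\bar f'\colon\wX\to X'$, and one checks $\bar f' f = f'$ on points and that $\bar f'$ is the unique such spectral map because the quasi-compact opens pulled back from $X'$ generate and $f(X)$ is patch-dense in $\wX$ by construction (every nonempty basic constructible of $\wX$ has the form $U\cap V^c$ with $U,V\in\calU$ and $U\not\subseteq V$, hence meets $f(X)$). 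Uniqueness of $\wX$ up to unique homeomorphism over $X$ is then the usual abstract-nonsense consequence of the universal property.

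For part \ref{it:recons-b}, set $Y := \overline{\Img(i)}^{\con} \subseteq X'$, a proconstructible subset of the spectral space $X'$, hence itself spectral with $\qcO(Y) = \{U\cap Y : U\in\qcO(X')\}$. The map $i$ corestricts to $\bar i\colon X\to Y$, which is spectral because $\bar i^{-1}(U\cap Y) = i^{-1}(U)\in\calU$. I claim $\bar i\colon X\to Y$ is the spectral closure. Given the uniqueness from \ref{it:recons-a}, it suffices to check the universal property directly, or — cleaner — to show $\Img(\bar i) = \Img(i)$ is patch-dense in $Y$ (true by definition of $Y$ as the patch-closure) and that $\qcO(Y)$ is exactly $\{\bar i^{-1}(\text{pullback of }U)\}$-generated in the appropriate sense; then apply \Cref{Lem:dense-epi}\ref{it:dense-epi.epi} to conclude that any two spectral maps out of $Y$ agreeing on $\Img(i)$ coincide, which gives uniqueness of the factorization. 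Existence of the factorization of an arbitrary spectral $f'\colon X\to X'$ through $\bar i$ is where one must work: one shows $f'$ extends (continuously for the spectral topologies, using that $\calU$ controls the quasi-compact opens) over the patch-closure $Y$ — here the compactness of the constructible topology and the fact that a spectral map is determined by, and can be prescribed by, a lattice map on quasi-compact opens do the job.

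The main obstacle I anticipate is step (2), the construction of $\bar f'$ and the verification that it lands in the correct lattice and is spectral — in other words, making precise that $\calU$, as a bounded distributive lattice, \emph{presents} the spectral space $\wX$ via Stone duality, and that spectral maps out of $\wX$ correspond bijectively to lattice homomorphisms into $\calU$. Everything else (spectrality of $f$, patch-density of $f(X)$, the identification in \ref{it:recons-b}) is formal once that duality is in hand. A secondary subtlety worth isolating: one must confirm that the topology on $\wX$ whose quasi-compact opens are generated by the images of $\calU$ really is spectral (sober, quasi-compact, with a basis of quasi-compact opens closed under finite intersection) — this is exactly the content of Stone's theorem for bounded distributive lattices and can be cited from \cite[\S1.3]{dst:spectral-spaces}.
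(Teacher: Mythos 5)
Your proposal is correct in substance but takes a genuinely different route from the paper. For part~(\ref{it:recons-a}) the paper never constructs $\wX$ explicitly: it embeds $\SpSp$ into the category of pairs $(X,\calU)$ via $X\mapsto (X,\qcO(X))$ and obtains the left adjoint, hence $\wX$, from the Special Adjoint Functor Theorem, relegating an explicit model (constructible closure of the image of $X$ in a product of Sierpi\'nski spaces indexed by $\calU$) to a remark. You instead realize $\wX$ as the spectral space dual to the bounded distributive lattice $\calU$ and check the universal property by hand; this is more explicit, avoids SAFT, and your patch-density argument for $f(X)\subseteq\wX$ is exactly what makes the uniqueness half work. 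For part~(\ref{it:recons-b}) the paper does not re-verify a universal property: it shows $j^*\colon\qcO(Y)\to\calU$ is bijective (surjective by hypothesis, injective by \Cref{Lem:dense-epi}) and $\eta^*$ injective, so the comparison map $\bar j^*$ is a lattice isomorphism and Stone duality concludes. Your plan of verifying the universal property of $Y=\overline{\Img(i)}^{\con}$ directly is where you leave real work undone: uniqueness is fine, but the existence of a factorization of an arbitrary spectral map $g\colon X\to Z$ (arbitrary spectral target $Z$, not the fixed $X'$) must be spelled out. It does go through: send $W\in\qcO(Z)$ to $U\cap Y$ where $U\in\qcO(X')$ satisfies $i\inv(U)=g\inv(W)$; the delicate point is that $U\cap Y$ is independent of the choice of $U$ and that this defines a lattice map, which is precisely the injectivity of $\qcO(Y)\to\calU$ coming from patch-density (again \Cref{Lem:dense-epi}), after which duality and sobriety of $Z$ give $\bar g$ with $\bar g\circ\bar i=g$. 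Alternatively, with your explicit $\wX$ in hand, (\ref{it:recons-b}) follows at once from the lattice isomorphism $\qcO(Y)\cong\calU\cong\qcO(\wX)$ compatible with the maps from $X$ --- which is essentially the paper's argument.
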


\begin{proof}
Part~\eqref{it:recons-a} is standard but we could not locate a precise reference in the literature.
Let $\SpSp'$ be the category of pairs $(X,\calU)$ as in \Cref{Hyp:XU}, with obvious morphisms (functions pulling back chosen subsets to chosen subsets).
There is an obvious functor $\mathcal{Q}\colon\SpSp\to\SpSp'$ sending a spectral space~$X$ to $(X,\qcO(X))$, which is moreover fully faithful, by definition of spectral maps.
We prove that $\mathcal{Q}$ admits a left adjoint $\mathcal{P}$ thus proving~\eqref{it:recons-a} with $\wX:=\mathcal{P}(X,\calU)$.
Observe that $\SpSp$ is locally small, well-powered~\cite[Corollary~5.2.8]{dst:spectral-spaces} and has a cogenerator~\cite[Corollary~1.2.8]{dst:spectral-spaces} and that $\SpSp'$ is locally small.
In both $\SpSp$ and $\SpSp'$, limits are computed by taking limits in~$\mathsf{Sets}$ and adding the obvious structure. It follows easily that the functor~$\mathcal{Q}$ preserves limits.
By the Special Adjoint Functor Theorem~\cite[Theorem~V.8.2]{MacLane:cats-working} this functor has a left adjoint~$\mathcal{P}$.

For~\eqref{it:recons-b},
let $Y=\overline{\Img(i)}^{\con}$ be the constructible-closure of~$X$ in~$X'$ and $j\colon X\to Y$ the function~$i\colon X\to X'$ corestricted. By~\cite[Theorem~2.1.3]{dst:spectral-spaces} the subspace~$Y$ of~$X$ is a spectral subspace.
The map~$j^*\colon \qcO(Y)\to \calU$ that maps $U$ to~$j\inv(U)$ is surjective by hypothesis on~$\calU$ and the fact that $Y\subseteq X'$ is a spectral subspace.
By \Cref{Lem:dense-epi}\,\ref{it:dense-epi.dense}$\To$\ref{it:dense-epi.UV} this map $j^*\colon \qcO(Y)\to \calU$ is also injective. So $j^*$ is bijective.
On the other hand, let $\eta\colon X\to \wX$ be the unit of the $\mathcal{P}\adj \mathcal{Q}$ adjunction. For every spectral space~$Z$, precomposing with~$\eta$ defines the bijection of the adjunction $\SpSp(\wX,Z)\isoto\SpSp'(X,\mathcal{Q}Z)$. We can therefore apply \Cref{Lem:dense-epi}\,\ref{it:dense-epi.epi}$\To$\ref{it:dense-epi.UV} to the function $\eta\colon X\to \wX$ to see that $\eta^*\colon \qcO(\wX)\to \calU$, defined by~$U\mapsto \eta\inv(U)$, is injective.
Finally, by the universal property applied to~$j\colon X\to Y$, there exists a spectral map~$\bar{j}\colon \wX\to Y$ such that $\bar{j}\circ\eta=j$.
We can again consider $\bar{j}^*\colon \qcO(Y)\to \qcO(\wX)$ for this third map, still defined by~$U\mapsto \bar{j}\inv(U)$, and we get a commutative diagram of lattices (for inclusion):
\[
\xymatrix@R=1em{\qcO(Y) \ar[rr]^-{\bar{j}^*} \ar[rd]_-{j^*}
&& \qcO(\wX) \ar[ld]^-{\eta^*}
\\
& \calU
}
\]
We have shown that $j^*$ is bijective and that $\eta^*$ is injective. Hence $\bar{j}^*$ is bijective.
It follows from Stone duality~\cite[3.2.10]{dst:spectral-spaces} that $\bar{j}$ is a homeomorphism.
\end{proof}

\begin{Rem}
One can also give an explicit formula for~$\wX$ as the constructible-closure of the image of the spectral map $\ev_X\colon X\to\prod_{U\in\calU}\{0\leadsto 1\}$ that is defined by~$(\ev_X(x))(U)=0$ if and only if $x\in U$. Indeed, that product provides a spectral space~$X'$ as in~\Cref{Thm:reconstruction}\,\eqref{it:recons-b}.
\end{Rem}

A direct consequence of \Cref{Thm:reconstruction}\,\eqref{it:recons-b} in tt-geometry is the following:
\begin{Cor}
\label{Cor:reconstruction}%
Let $\cK$ be an essentially small tt-category and let~$X\subseteq \SpcK$ be a patch-dense subset. Then the space $\SpcK$ can be reconstructed as the spectral closure~$\wX$ of the pair~$(X,\calU)$ for $\calU=\SET{X\cap \supp(k)^c}{k\in\cK}$. In particular, $\SpcK$ can be reconstructed from the restricted support theory $(X,\supp_{X})$, where $\supp_{X}(k)=X\cap \supp(k)$ for every~$k\in\cK$.
\qed
\end{Cor}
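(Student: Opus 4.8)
The plan is to read the statement off \Cref{Thm:reconstruction}\,\eqref{it:recons-b}, applied to the inclusion $i\colon X\hookrightarrow\SpcK$. The first step is to check that the pair $(X,\calU)$ with $\calU=\SET{X\cap\supp(k)^c}{k\in\cK}$ fits \Cref{Hyp:XU}. Abbreviating $U(k)=\supp(k)^c$, the standard identities $\supp(k\otimes\ell)=\supp(k)\cap\supp(\ell)$ and $\supp(k\oplus\ell)=\supp(k)\cup\supp(\ell)$ give $U(k)\cup U(\ell)=U(k\otimes\ell)$ and $U(k)\cap U(\ell)=U(k\oplus\ell)$; intersecting with $X$ shows that $\calU$ is closed under finite unions and finite intersections. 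Since $X=X\cap U(0)$ and $\varnothing=X\cap U(\unit)$, the collection $\calU$ is non-empty and contains $X$ and $\varnothing$, exactly as \Cref{Hyp:XU} requires.

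The second step is to verify the hypothesis of \Cref{Thm:reconstruction}\,\eqref{it:recons-b}: that $\calU$ is precisely the trace along $i$ of $\qcO(\SpcK)$. By~\cite{balmer:spectrum} (see also \Cref{Rem:cons-SpcK}) the quasi-compact opens of $\SpcK$ are exactly the subsets $U(k)$ for $k\in\cK$. As $i\inv(U(k))=X\cap\supp(k)^c$, we get $\SET{i\inv(U)}{U\in\qcO(\SpcK)}=\calU$ verbatim. Then \Cref{Thm:reconstruction}\,\eqref{it:recons-b} identifies the spectral closure $\wX$ with the constructible-closure $\overline{\Img(i)}^{\con}$ of $X$ inside $\SpcK$. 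But $X$ being patch-dense means exactly that this constructible-closure is all of $\SpcK$; hence $\wX\cong\SpcK$, compatibly with the canonical maps $X\to\wX$ and $X\hookrightarrow\SpcK$. This proves the first assertion.

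For the final sentence it suffices to observe that the restricted support theory $(X,\supp_{X})$ already determines $\calU$: complementing inside $X$ gives $X\cap\supp(k)^c=X\sminus\supp_{X}(k)$, so the family $\{\supp_{X}(k)\}_{k\in\cK}$ of (closed) subsets of $X$ recovers $\calU$, hence the pair $(X,\calU)$, hence by part~\eqref{it:recons-a} its spectral closure, which we have just identified with $\SpcK$.

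I do not expect a genuine obstacle here, the corollary being essentially a repackaging of \Cref{Thm:reconstruction}\,\eqref{it:recons-b}. The two points that deserve a moment's attention are the appeal to the classification of the quasi-compact opens of $\SpcK$ as the $U(k)$ (needed to match $\calU$ with the restricted $\qcO$), and the remark that ``patch-dense'' is precisely the condition under which the constructible-closure appearing in that theorem is the whole of $\SpcK$.
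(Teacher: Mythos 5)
Your proposal is correct and follows exactly the route the paper intends: the corollary is stated as a direct consequence of \Cref{Thm:reconstruction}\,\eqref{it:recons-b} applied to the inclusion $X\subseteq\Spc(\cK)$, using that the quasi-compact opens of $\Spc(\cK)$ are the $\supp(k)^c$ and that patch-density makes the constructible closure all of $\Spc(\cK)$. Your verification of \Cref{Hyp:XU} via $\supp(k\otimes\ell)=\supp(k)\cap\supp(\ell)$ and $\supp(k\oplus\ell)=\supp(k)\cup\supp(\ell)$, and the observation that $\supp_X$ determines $\calU$ by complementation, are exactly the (unwritten) details the paper leaves to the reader.
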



\section{Detecting tensor-nilpotence}
\label{sec:detection}%

\begin{Rec}
\label{Rec:tensor-nilpotence}
\begin{enumerate}
\item A morphism $f\colon k\to \ell$ in a tensor category is said to be \emph{$\otimes$-nilpotent} if $f\potimes{s}\colon k\potimes{s}\to \ell\potimes{s}$ is zero for $s\gg 0$.
\item More generally $f$ is \emph{$\otimes$-nilpotent on an object~$m$} if $f\potimes{s}\otimes m\colon k\potimes{s}\otimes m\to \ell\potimes{s}\otimes m$ is zero for $s\gg 0$. This explains the meaning of $f$ being \emph{$\otimes$-nilpotent on its cone}.
\item
A family of tensor functors $\{F_i\colon \cK\to \cL_i\}_{i\in I}$ is said to \emph{(jointly) detect $\otimes$-nilpotence} (on a class of morphisms) if for each morphism $f\colon k\to \ell$ (in that class), we have the following implication:
\[
F_i(f)\textrm{ is $\otimes$-nilpotent for all }i\in I\quad \Rightarrow\quad f\textrm{ is $\otimes$-nilpotent.}
\]
\end{enumerate}
\end{Rec}

\begin{Thm}
\label{Thm:dense-patch}%
Let $\cK$ be an essentially small rigid tt-category and let \break $\{F_i\colon \cK\to \cL_i\}_{i\in I}$ be a family of tt-functors.
The following are equivalent:
\begin{enumerate}[label=\rm(\roman*), ref=\rm(\roman*)]
\item \label{it:dense-patch.nilpotent}The tt-functors $F_i\colon \cK\to\cL_i$ jointly detect $\otimes$-nilpotence of morphisms that are $\otimes$-nilpotent on their cones.
\item The maps $\Spc(F_i)\colon \Spc(\cL_i)\to\Spc(\cK)$ jointly distinguish supports.\label{it:dense-patch.supp}
\item The maps $\Spc(F_i)\colon \Spc(\cL_i)\to\Spc(\cK)$ are jointly epimorphic in the category of spectral spaces.\label{it:dense-patch.epi}
\item \label{it:dense-patch.dense}The subset $\cup_{i\in I}\Img(\Spc(F_i))\subseteq\SpcK$ is patch-dense.
\end{enumerate}
\end{Thm}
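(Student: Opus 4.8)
The plan is to prove the cycle of implications $\ref{it:dense-patch.nilpotent}\Rightarrow\ref{it:dense-patch.supp}$, then $\ref{it:dense-patch.supp}\Leftrightarrow\ref{it:dense-patch.epi}\Leftrightarrow\ref{it:dense-patch.dense}$, and finally close the loop with $\ref{it:dense-patch.dense}\Rightarrow\ref{it:dense-patch.nilpotent}$. The three conditions \ref{it:dense-patch.supp}, \ref{it:dense-patch.epi}, \ref{it:dense-patch.dense} are essentially formal consequences of what has already been set up. Indeed, \ref{it:dense-patch.supp}$\Leftrightarrow$\ref{it:dense-patch.dense} is exactly \Cref{Def:separate-supports} together with \Cref{Rem:cons-SpcK}, applied to the family $\phi_i=\Spc(F_i)$. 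For \ref{it:dense-patch.epi}$\Leftrightarrow$\ref{it:dense-patch.dense} I would invoke \Cref{Lem:dense-epi}: being jointly epimorphic onto $\SpcK$ in $\SpSp$ is, after replacing the family $\{\Spc(F_i)\}_i$ by the single function $i\colon \coprod_i\Spc(\cL_i)\to\SpcK$ out of the (spectral) coproduct, precisely condition \ref{it:dense-epi.epi} of that lemma, which is equivalent to patch-density \ref{it:dense-epi.dense} of the image. (One should note the coproduct of spectral spaces need not be spectral if $I$ is infinite, so instead I would argue directly: $\{\alpha_i\}$ jointly epic means any two spectral maps $\alpha,\beta\colon\SpcK\to Z$ agreeing after precomposition with every $\Spc(F_i)$ are equal, and run the Sierpi\'nski-space argument of \Cref{Lem:dense-epi} verbatim with $x$ ranging over $\cup_i\Img(\Spc(F_i))$.)

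The substance of the theorem is the equivalence of \ref{it:dense-patch.nilpotent} with the topological conditions, and I expect the main obstacle to be the implication \ref{it:dense-patch.dense}$\Rightarrow$\ref{it:dense-patch.nilpotent} (equivalently \ref{it:dense-patch.supp}$\Rightarrow$\ref{it:dense-patch.nilpotent}). Here I would follow the strategy of \cite{balmer:surjectivity}, now in the family setting. Let $f\colon k\to\ell$ be a morphism that is $\otimes$-nilpotent on its cone $\cone(f)$ and assume each $F_i(f)$ is $\otimes$-nilpotent. The key reduction, standard since \cite{balmer:spectrum}, is that $f$ being $\otimes$-nilpotent on $\cone(f)$ together with the rigidity of $\cK$ forces the ``$\otimes$-nilpotence locus'' to be governed by supports: specifically, $f$ is $\otimes$-nilpotent if and only if $\supp(\cone(f))\supseteq\supp(k)$, i.e.\ $k$ lies in the tt-ideal generated by $\cone(f)$, because the hypothesis on the cone kills the only obstruction to this implication (the Koszul-object argument, cf.\ \cite[\S2]{balmer:spectrum} on the behaviour of $\supp$ under cones and tensor powers). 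Similarly, $F_i(f)$ being $\otimes$-nilpotent is equivalent to $\Spc(F_i)\inv(\supp(k))\subseteq\Spc(F_i)\inv(\supp(\cone(f)))$, using that $F_i$ commutes with cones and tensor, that $\cL_i$'s support theory is universal through $\Spc(F_i)$, and the identity $\Spc(F_i)\inv(\supp(m))=\supp(F_i(m))$. Granting all $F_i(f)$ are $\otimes$-nilpotent then gives $\Spc(F_i)\inv(\supp(k))\subseteq\Spc(F_i)\inv(\supp(\cone(f)))$ for all $i$; condition \ref{it:dense-patch.supp} (applied with $\ell$ replaced by $k\otimes\cone(f)$, so that $\supp(\ell')=\supp(k)\cap\supp(\cone(f))$) upgrades this to $\supp(k)\subseteq\supp(\cone(f))$, whence $f$ is $\otimes$-nilpotent.

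For the remaining implication \ref{it:dense-patch.nilpotent}$\Rightarrow$\ref{it:dense-patch.supp}, I would argue contrapositively using \Cref{Rem:cons-SpcK}: if the family does not distinguish supports, there exist $k,\ell\in\cK$ with $\supp(k)\not\subseteq\supp(\ell)$ yet $C(k,\ell)=\supp(k)\cap\supp(\ell)^c$ missing every $\Img(\Spc(F_i))$. Using rigidity, I would build from the pair $(k,\ell)$ an explicit morphism $f$ — a suitable Koszul-type morphism whose cone realizes the tt-ideal with support $\supp(\ell)\cup\supp(k)^c$ — such that $f$ is automatically $\otimes$-nilpotent on its cone but is not $\otimes$-nilpotent (because $\supp(k)\not\subseteq\supp(\ell)$), while each $F_i(f)$ is $\otimes$-nilpotent (because $\Spc(F_i)\inv$ of $C(k,\ell)$ is empty, so the relevant support containment holds after applying each $F_i$). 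This contradicts \ref{it:dense-patch.nilpotent}. The delicate point throughout, and the place I'd be most careful, is manufacturing the morphism $f$ and verifying the precise dictionary between ``$F_i(f)$ $\otimes$-nilpotent'' and a containment of pullback-supports; this is exactly where rigidity (equivalently, the absoluteness of $\otimes$-nilpotence detection via the spectrum, as in \cite{balmer:surjectivity}) is indispensable, and where the single-functor case must be generalized with care.
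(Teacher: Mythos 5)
Your treatment of the purely topological part is fine and matches the paper: \ref{it:dense-patch.supp}$\Leftrightarrow$\ref{it:dense-patch.dense} is \Cref{Def:separate-supports} with \Cref{Rem:cons-SpcK}, and \ref{it:dense-patch.epi}$\Leftrightarrow$\ref{it:dense-patch.dense} follows from \Cref{Lem:dense-epi} (the paper simply applies it to $D=\cup_i\Img(\Spc(F_i))$, which sidesteps your worry about infinite coproducts). Your argument for \ref{it:dense-patch.supp}$\Rightarrow$\ref{it:dense-patch.nilpotent} is also essentially the paper's adaptation of \cite{balmer:surjectivity}: from $F_i(f)$ $\otimes$-nilpotent one gets $\Spc(F_i)\inv(\supp(k))\subseteq\Spc(F_i)\inv(\supp(\cone(f)))$, and your device of feeding the pair $(k,\,k\otimes\cone(f))$ into the distinguishing-supports hypothesis to convert these containments into $\supp(k)\subseteq\supp(\cone(f))$ is a correct minor variant of the paper's use of $k\oplus\ell$ together with $\supp(\cone(f\potimes{s}))=\supp(\cone(f))$; the concluding step ($k\in\ideal{\cone(f)}$ plus nilpotence on the cone forces nilpotence) is the same standard argument.

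The genuine gap is \ref{it:dense-patch.nilpotent}$\Rightarrow$\ref{it:dense-patch.supp}, which you do not actually prove: you defer the construction of the test morphism $f$ and explicitly flag it as the delicate point, and the description you do give is wrong — there is in general no object whose cone has support $\supp(\ell)\cup\supp(k)^c$, since that subset is not Thomason (not even closed), and no ``Koszul-type'' morphism in the usual sense does the job. The missing ingredient is the coevaluation triangle of a rigid object (the paper's \Cref{Rem:sss}, following \cite{balmer:sss}): with $\xi_k\colon J_k\to\unit$ the fiber of the unit map $\unit\to k\otimes k^\vee$, one sets $f=\xi_k\otimes\ell$ for a pair with $\supp(\ell)\not\subseteq\supp(k)$. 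The three facts one must verify are: (1) $f$ is zero (in particular $\otimes$-nilpotent) on its cone $k\otimes k^\vee\otimes\ell$; (2) $f$ is $\otimes$-nilpotent if and only if $\ell\in\ideal{k}$, equivalently $\supp(\ell)\subseteq\supp(k)$, so $f$ is not $\otimes$-nilpotent here; and (3) every tt-functor preserves the construction, so $F_i(f)\cong\xi_{F_i(k)}\otimes F_i(\ell)$ and the same nilpotence criterion applies in $\cL_i$ (using that $F_i(k),F_i(\ell)$ remain rigid even though $\cL_i$ need not be rigid), which is exactly the dictionary between ``$F_i(f)$ $\otimes$-nilpotent'' and the pullback-support containment that your sketch presupposes. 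Without producing such an $f$ and checking (1)--(3), the non-formal half of the theorem remains unproven.
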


\begin{Rem}
\label{Rmk:surjective-singleton-family}
In~\cite[Theorem~1.3]{balmer:surjectivity}, a special case of \Cref{Thm:dense-patch} was proved, namely when the family consists of a single tt-functor~$F\colon \cK\to \cL$; in that case, the map~$\Spc(F)$ is surjective.
Indeed, the image of a single spectral map is always a closed subset for the constructible topology~\cite[Corollary~1.3.23]{dst:spectral-spaces}.
\end{Rem}

Let us do a little preparation for the proof of \Cref{Thm:dense-patch}.
\begin{Rem}
\label{Rem:sss}%
Let $\cL$ be an essentially small tt-category, that is not assumed to be rigid.
For every rigid object~$k$ in~$\cL$, we consider $k^\vee$ the dual of~$k$ and we denote by $A_k$ the ring object~$k\otimes k^\vee\cong\ihom(k,k)$, by $\eta_k\colon \unit\to A_k$ its unit (\aka coevaluation) and $\xi_k\colon J_k\to \unit$ its homotopy fiber, so that we have an exact triangle in~$\cL$,
\begin{equation}\label{eq:sss}%
\xymatrix{J_k \ar[r]^-{\xi_k} & \unit \ar[r]^-{\eta_k} & A_k \ar[r]^-{} & \Sigma J_k\,.}
\end{equation}
Let us recall a few standard observations from~\cite{balmer:sss}.
\begin{enumerate}
\item
\label{it:nil-A}%
By the unit-counit relation in the adjunction~$(k\otimes-)\adj (k^\vee\otimes-)$ the map $\eta_k\otimes k$ is a split monomorphism, hence $k$ is a summand of~$k\otimes k^\vee\otimes k$ and $\xi_k\otimes k=0$.
\item
\label{it:nil-locus}%
The tt-ideal generated by~$k$, or equivalently by~$A_k$, is also the nilpotence locus of~$\xi_k$, that is,
$\ideal{k}=\ideal{A_k}=\SET{m\in\cL}{\exists\,s\gg1\textrm{ s.t.\ }\xi_k\potimes{s}\otimes m=0}$.
\item
Any tt-functor preserves rigidity, duals, $\eta_k$, etc. So once Construction~\eqref{eq:sss} is performed in one category (\eg\ the $\cK$ in the theorem), its properties get transported by any tt-functor $F\colon \cK\to \cL$ (\eg\ the $F_i$), even if $\cL$ is not assumed rigid.
\end{enumerate}

Let us improve on~\eqref{it:nil-locus}:
\begin{enumerate}
\setcounter{enumi}{3}
\item\label{it:nil}%
Let $\ell\in\cL$ be another rigid object and consider the map~$f=\xi_k\otimes \ell\colon J_k\otimes \ell\to \ell$.
Then $f$ is $\otimes$-nilpotent (in fact zero) on its cone, which is~$A_k\otimes\ell$, because of~\eqref{it:nil-A}.
Furthermore, we have
\[
f\textrm{ is $\otimes$-nilpotent if and only if }\ell\in\ideal{k}.
\]
Indeed, if $0=f\potimes{s}\colon J_k\potimes{s}\otimes \ell\potimes{s}\to \ell\potimes{s}$, we see that $\ell\potimes{s}\in\ideal{\cone(f\potimes{s})}\subseteq\ideal{\cone(f)}\subseteq\ideal{\cone(\xi_k)}=\ideal{A_k}=\ideal{k}$ and therefore $\ell\in\ideal{k}$, since $\ell$ is rigid.
Conversely, if~$\ell\in\ideal{k}=\ideal{A_k}$ we already know that $\xi_k$ is nilpotent on~$\ell$ by~\eqref{it:nil-locus}.
This proves the claim. By rigidity again, the condition $\ell\in\ideal{k}$ is equivalent to~$\supp(\ell)\subseteq\supp(k)$.
\end{enumerate}
\end{Rem}

\begin{proof}[Proof of \Cref{Thm:dense-patch}]
We abbreviate $Y_i=\Spc(\cL_i)$ and $X=\Spc(\cK)$ with $\phi_i=\Spc(F_i)\colon Y_i\to X$.
The equivalence between \ref{it:dense-patch.supp} and \ref{it:dense-patch.dense} was already explained in \Cref{Def:separate-supports}.
The equivalence between \ref{it:dense-patch.epi} and \ref{it:dense-patch.dense} follows easily from \Cref{Lem:dense-epi} applied to~$D=\cup_i\Img(\phi_i)$.
It suffices to prove that \ref{it:dense-patch.nilpotent} is equivalent to~\ref{it:dense-patch.supp}.

For \ref{it:dense-patch.nilpotent}$\Rightarrow$\ref{it:dense-patch.supp}, let $k,\ell\in\cK$ such that $\supp(\ell)\not\subseteq\supp(k)$. Let $f=\xi_k\otimes \ell$ as in \Cref{Rem:sss}\,\eqref{it:nil} which tells us that $f$ is not $\otimes$-nilpotent, although it is $\otimes$-nilpotent on its cone.
By our assumption~\ref{it:dense-patch.nilpotent}, there must exist some $i\in I$ such that $F_i(f)$ is not $\otimes$-nilpotent. Let $k_i=F_i(k)$ and $\ell_i=F_i(\ell)$ in~$\cL_i$, which are rigid objects, with $k_i^\vee\cong F_i(k^\vee)$ and~$\ell_i^\vee\cong F_i(\ell^\vee)$. Under these identifications, $\xi_{k_i}=F_i(\xi_k)$ and $F_i(f)=\xi_{k_i}\otimes \ell_i$. We can thus apply \Cref{Rem:sss}\,\eqref{it:nil} to~$k_i$ and~$\ell_i$ in~$\cL_i$ and deduce that $\phi_i\inv(\supp(\ell))=\supp(\ell_i)\not\subseteq\supp(k_i)=\phi_i\inv(\supp(k))$.
This shows that the family $(\phi_i)_{i\in I}$ jointly distinguishes supports.

The proof of \ref{it:dense-patch.supp}$\Rightarrow$\ref{it:dense-patch.nilpotent} is a straightforward adaptation of the proof in~\cite[Theorem~1.4]{balmer:surjectivity}.
Indeed, if $f\colon k\to \ell$ is a morphism in $\cK$ that is $\otimes$-nilpotent on its cone and $F_i(f)$ is $\otimes$-nilpotent, say $F_i(f\potimes{s_i})=0$, then as in \loccit{} we deduce that
\[
\phi_i\inv(\supp(\cone(f\potimes{s_i})))=\phi_i\inv\left(\supp(k\potimes{s_i})\cup\supp(\ell\potimes{s_i})\right).
\]
We now observe that these supports do not change if we replace~$s_i$ by~$1$; for the left-hand side see~\cite[Proposition~2.10]{balmer:surjectivity}.
In other words, we have, for all~$i\in I$:
\[
\phi_i\inv(\supp(\cone(f)))=\phi_i\inv(\supp(k)\cup\supp(\ell))=\phi_i\inv(\supp(k\oplus \ell)).
\]
By our assumption we get
\[
\supp(\cone(f))=\supp(k\oplus \ell)=\supp(k)\cup\supp(\ell)
\]
and one concludes that $f$ is $\otimes$-nilpotent by a standard argument: $k,\ell\in\ideal{\cone(f)}$ forces~$f\colon k\to \ell$ to be $\otimes$-nilpotent on~$k$ (and~$\ell$) hence to be $\otimes$-nilpotent.
\end{proof}


\section{Examples}
\label{sec:examples}%

Let us show that patch-dense subsets commonly arise in nature.


\subsection{Visible locus}
\label{sec:visibility}

\begin{Exa}
\label{Exa:SH}
Consider the tt-category of finite $p$-local spectra $\SH^c_{(p)}$ for some prime~$p$.
This example was already discussed in the introduction so we will be brief and content ourselves with providing references.
A morphism $f\colon k\to \ell$ in $\SH^c_{(p)}$ is $\otimes$-nilpotent if and only if $K(n)_*(f)$ is for all $\infty>n\geq 0$, where $K(n)$ denotes the $n$th Morava $K$-theory (at the prime~$p$).
This follows from the Nilpotence Theorem of Devinatz, Hopkins, Smith in the form of~\cite[Theorem~3.iii), Corollary~2.2]{hopkins-smith:chromatic}.
This corresponds to the fact that the `finite' points in the spectrum are patch-dense, see~\eqref{eq:Spc-SH}.

We turn this into a more general observation, using that the `finite' points are precisely the weakly visible ones:
\end{Exa}

\begin{Rec}
\label{Rec:weakly-visible}
Recall that a subspace $Y\subseteq X$ of a spectral space~$X$ is called \emph{weakly visible} if $Y=V\cap W^{c}$ for two Thomason subsets $V,W\subseteq X$.
If $\cT$ is a rigidly-compactly generated tt-category, then the weakly visible subsets in~$\Spc(\cT^c)$ are those that can be described in terms of $\otimes$-idempotents in~$\cT$, as in~\cite{balmer-favi:idempotents}.
For a theory of stratification of~$\cT$ based on this the reader can consult~\cite{barthel-heard-sanders:stratification-Mackey}.
\end{Rec}

\begin{Prop}
\label{Prop:weakly-visible}
Let $X$ be a spectral space.
The set of weakly visible points in~$X$ is patch-dense.
\end{Prop}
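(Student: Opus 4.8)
The plan is to show directly that a weakly visible point cannot be avoided by any nonempty constructible set, using the characterization of patch-density from \Cref{Rec:patch-topology} (meeting every nonempty constructible) together with the basic combinatorics of constructibles. Recall that a point $x$ is weakly visible if $\{x\} = V \cap W^c$ for two Thomason subsets $V, W$; equivalently, $\{x\}$ is a proconstructible set whose complement is also proconstructible-ish — but more usefully, $\{x\}$ is \emph{locally closed} in the constructible topology in a strong sense. I would first record the key reformulation: a nonempty constructible set $C$ meets the weakly visible locus if and only if $C$ contains some weakly visible point.

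\emph{First step: reduce to basic constructibles.} By \Cref{Rec:patch-topology}, the sets $U \cap V^c$ with $U, V$ quasi-compact open form a basis of the constructible topology, so it suffices to show every nonempty $C = U \cap V^c$ contains a weakly visible point. Replacing $U$ by $U$ and noting $U \cap V^c = U \cap (U \cap V)^c$, we may assume $V \subseteq U$; then $C = U \setminus V$ with $V \subsetneq U$ both quasi-compact open, so $U$ is a Thomason subset of the \emph{dual} space $X^*$... let me instead work with the original space. Here is the cleaner approach: $U$ is quasi-compact open, hence $U^c$ is Thomason; and $V$ is quasi-compact open, hence $V^c$ is Thomason. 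So $C = U \cap V^c$ is an intersection of an open (in $X$) with a Thomason subset.

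\emph{Second step: find a closed point of a suitable subspace.} The set $C = U \cap V^c$, being constructible and hence proconstructible, is itself a spectral space in the subspace topology (\cite[Theorem~2.1.3]{dst:spectral-spaces}), so it has a closed point $x$ (every nonempty spectral space does, by \cite[Theorem~4.3(c)]{hochster:prime-ideal-structure} or \cite[Corollary~4.1.10]{dst:spectral-spaces}). I claim $x$ is weakly visible \emph{in $X$}. To see this, note $\overline{\{x\}}^X \cap C = \{x\}$ since $x$ is closed in $C$; and $\overline{\{x\}}^X$ is the irreducible closed subset with generic point $x$. The hard part will be expressing $\overline{\{x\}}^X$ — or rather the single point $\{x\}$ — as $V' \cap (W')^c$ for Thomason $V', W'$. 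The trick: take $V' = \overline{\{x\}}^X$, which is closed, but closed subsets of a spectral space need not be Thomason (Thomason = specialization-closed \emph{and} a union of complements of quasi-compact opens, equivalently arbitrary unions of closed-with-quasi-compact-complement). So I instead argue: $\{x\} = (\overline{\{x\}}^X \cap U) \cap V^c \cap (\text{something})$. Since $x$ is the closed point of $C$, every other point $y \in C$ fails to specialize to $x$, i.e. $x \notin \overline{\{y\}}$, i.e. there is a quasi-compact open containing $x$ but not $y$. By quasi-compactness of $C_{\con}$ and a standard argument, finitely many such quasi-compact opens $U_1, \dots, U_n$ suffice so that $(U_1 \cap \cdots \cap U_n) \cap C = \{x\}$; set $U' = U \cap U_1 \cap \cdots \cap U_n$, still quasi-compact open. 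Then $\{x\} = U' \cap V^c$, which exhibits $x$ as weakly visible: $U'$ quasi-compact open means $X \setminus U'$ is Thomason, so $\{x\} = U' \cap V^c = (X \setminus (X\setminus U')) \cap V^c$; writing $V'' := X$ (Thomason) and $W := (X \setminus U') \cup V$ (union of two Thomason sets, hence Thomason), we get $\{x\} = X \cap W^c = V'' \cap W^c$. Wait — we also need $\{x\} \subseteq V''$; taking $V'' = X$ works trivially. Hence $x$ is weakly visible and $x \in C$, completing the proof.

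\emph{Main obstacle.} The delicate point is the finiteness extraction: that the closed point $x$ of the spectral subspace $C$ can be separated from \emph{all} other points of $C$ by \emph{finitely many} quasi-compact opens of the ambient $X$. This follows because $C$ is proconstructible in $X$, hence $C_{\con}$ is compact; the quasi-compact opens of $X$ meeting $C$ in a set containing $x$ form an open cover (in $C_{\con}$) of $C \setminus \{x\}$... one must be a little careful since $C \setminus \{x\}$ need not be closed in $C_{\con}$ — but $\{x\}$ \emph{is} closed in $C$ (it is the designated closed point and $C$ is $T_0$ sober, so closed points are closed), hence $\{x\}$ is proconstructible in $C$ and thus in $X$, and $C \setminus \{x\}$ is constructible-open; covering it with basic constructibles of $X$ and using compactness of $\{x\}$'s complement within the compact space $C_{\con}$ gives the finitely many $U_i$. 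I would spell this out carefully, as it is the only non-formal ingredient; everything else is bookkeeping with Thomason subsets and the cited structural facts about spectral spaces.
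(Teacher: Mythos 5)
Your argument has a genuine gap, and in fact the strategy cannot work as stated. You choose $x$ to be a \emph{closed} point of the spectral subspace $C=U\cap V^c$ and claim that it is weakly visible in $X$. This is false in general: take $R$ local with maximal ideal $\gm$ not finitely generated up to radical, $X=\Spec(R)$, and $C=X$ itself (so $U=X$, $V=\varnothing$). The unique closed point of $C$ is $\gm$, which is precisely \emph{not} weakly visible -- this is the situation of \Cref{Exa:punctured} in the paper. Two things go wrong in your argument. First, the deduction ``$x$ closed in $C$ $\Rightarrow$ for every other $y\in C$ one has $x\notin\overline{\{y\}}$'' reverses the direction of specialization: closedness of $x$ says no other point lies in $\overline{\{x\}}$, not that $x$ avoids the closures of the other points; in the example above every $y$ specializes to $\gm$, so the separating quasi-compact opens $O_y\ni x$, $y\notin O_y$ need not exist at all. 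Second, even where they do exist, the finiteness extraction is unjustified: $C\sminus\{x\}$ is constructible-\emph{open} in the compact space $C_{\con}$, not closed, so compactness gives you no finite subcover -- and indeed no finite family can work, since $\{x\}=U'\cap V^c$ with $U'$ quasi-compact open would make $x$ a constructible point, a strictly stronger property than weak visibility which fails in the example.

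The fix is to aim for the weaker shape that weak visibility actually requires, namely $\{y\}=V'\cap W'^{\,c}$ with $V',W'$ Thomason, where $W'^{\,c}$ may be an \emph{infinite} intersection of quasi-compact opens. Concretely (this is the paper's route): starting from $x\in U\cap V^c$, do not take a closed point but a maximal \emph{generalization} $y$ of $x$ inside the spectral subspace $V^c$ (it exists by~\cite[Corollary~4.1.4]{dst:spectral-spaces}); then $y\in U$ because opens are closed under generalization, and $\{y\}=V^c\cap\mathrm{gen}\{y\}$, where $V^c$ is Thomason and $\mathrm{gen}\{y\}=\bigcap_{O\in\qcO(X),\,y\in O}O$ is the complement of a Thomason subset. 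No finiteness is needed because the infinitely many opens sit on the complemented-Thomason side of the definition.
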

\begin{proof}
Let $W=U\cap V^c$ be a non-empty basic open for the constructible topology, with $U,V$ quasi-compact open in~$X$ and let $x\in W$.
Since the closed subspace $V^c$ is itself a spectral space there is a point $y\in V^c$ with $y\leadsto x$ and which has no further generalizations in~$V^c$~\cite[Corollary~4.1.4]{dst:spectral-spaces}.
As $U$ is open we have $y\in W$ so it suffices to show that $y$ is weakly visible.
By construction, $\{y\}=V^c\cap\mathrm{gen}\{y\}$, where $\mathrm{gen}$ denotes the set of generalizations.
We conclude since $V^c$ is Thomason and the set $\mathrm{gen}\{y\}=\cap_{O \in\qcO(X)\mid y\in O}O$ is the complement of a Thomason.
\end{proof}

\begin{Exa}
\label{Exa:punctured}
For an algebraic situation similar to \Cref{Exa:SH}, let $R$ be a local commutative ring whose maximal ideal~$\gm$ is not finitely generated, not even up to radicals.
(For example, the local ring at a closed point in infinite affine space.)
This is equivalent to the closed point~$\{\gm\}$ not being (weakly) visible in~$\Spec(R)$ in the sense of \Cref{Rec:weakly-visible}.
By \Cref{Prop:weakly-visible}, the punctured spectrum $\Spec(R)\sminus\{\gm\}$ is patch-dense.
This corresponds to the fact that $\otimes$-nilpotence of maps in $\Dperf(R)$ is detected by the residue field functors $-\otimes_R\kappa(\gp)$ for the non-maximal primes~$\gp\in\Spec(R)\sminus\{\gm\}$.
\end{Exa}

\begin{Rem}
\label{Rem:noetherian}%
In both \Cref{Exa:SH,Exa:punctured}, the complement of the closed point(s) was patch-dense.
This is not possible if the space $X$ is noetherian.
Indeed, when $X$ is noetherian, the open complement $X\sminus \{x\}$ of every closed point~$x$ is quasi-compact, as every open is; this implies that~$\{x\}$ is open for the constructible topology.
Hence any patch-dense subset must contain all the closed points.

In fact, for noetherian spaces there is a \emph{smallest} patch-dense subset.
It consists precisely of the locally closed points~\cite[Proposition~4.5.21, Corollary~8.1.19]{dst:spectral-spaces}.
\end{Rem}

\begin{Exa}
\label{Exa:Jacobson}%
A (spectral) space~$X$ is called \emph{Jacobson} if its closed points are dense in every closed subset.
For instance, the underlying space of every scheme locally of finite type over a field, or locally of finite type over~$\mathbb{Z}$, is a Jacobson space.

In a Jacobson space, a subset~$D\subseteq X$ that contains all closed points is necessarily patch-dense.
Indeed, let $Y^c\cap Z\neq\varnothing$ be a non-trivial basic constructible, with $Y$ and~$Z$ closed subsets with quasi-compact complement. Since the open $Y^c\cap Z$ of~$Z$ is non-empty, it must contain a closed point of~$X$, which is in~$D$ by assumption.

In view of \Cref{Rem:noetherian}, in `usual' algebraic geometry, say, when dealing with schemes of finite type over a field, or of finite type over~$\mathbb{Z}$, a subset is patch-dense if and only if it contains all the closed points.
\end{Exa}


\subsection{Retractable limits}
\label{sec:limits}

\begin{Not}
\label{Not:colimit-tt-cats}
Let $\cK=\colim \cK_i$ be the directed colimit of essentially small tt-categories.
Let $X_i:=\Spc(\cK_i)$ and $X=\Spc(\cK)$.
We denote by $\pi_i^*\colon \cK_i\to \cK$ the canonical tt-functor, and by $\pi_i\colon X\to X_i$ the induced map on spectra.
Recall from~\cite[Proposition~8.2]{gallauer:tt-fmod} that the maps~$\pi_{i}$ induce a homeomorphism
\begin{equation}
\label{eq:limit}
X\isoto\lim_i X_i.
\end{equation}
Let us assume that each $\pi_i^*$ admits a tt-retraction
\[
\sigma_i^*\colon \cK\to\cK_i,
\]
so that $\pi_i\circ\sigma_i=\id\colon X_i\to X_i$ for all~$i$, where $\sigma_i:=\Spc(\pi_i^*)$.
\end{Not}

\begin{Cor}
\label{Cor:dense-limit}%
The family $\{\sigma_i^*\colon\cK\to\cK_i\}$ jointly detects $\otimes$-nilpotence.
In particular, if $\cK$ is rigid then the subset $\cup_i\Img(\sigma_i)\subseteq\Spc(\cK)$ is patch-dense.
\end{Cor}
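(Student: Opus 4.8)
The plan is to read off the detection of $\otimes$-nilpotence directly from the presentation $\cK=\colim_i\cK_i$ as a directed colimit of tt-categories, and then to obtain the patch-density statement for free from \Cref{Thm:dense-patch}. Rigidity of~$\cK$ is needed only for the second assertion.

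First I would take an arbitrary morphism $f\colon k\to\ell$ in~$\cK$ such that $\sigma_i^*(f)$ is $\otimes$-nilpotent for every~$i\in I$, with the aim of proving that $f$ itself is $\otimes$-nilpotent. Since $\cK$ is the directed colimit of the~$\cK_i$, the objects $k$ and~$\ell$ already live at some finite stage, say $k\cong\pi_i^*(k_i)$ and $\ell\cong\pi_i^*(\ell_i)$ with $k_i,\ell_i\in\cK_i$; and, after possibly enlarging the index~$i$, the morphism~$f$ is itself the image of a morphism $f_i\colon k_i\to\ell_i$, \ie $\pi_i^*(f_i)\cong f$ under the above isomorphisms. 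This is a standard property of directed colimits of categories, used \eg in establishing~\eqref{eq:limit}; see~\cite[Proposition~8.2]{gallauer:tt-fmod}.

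Next I would use that $\sigma_i^*$ is a tt-retraction of~$\pi_i^*$, \ie $\sigma_i^*\circ\pi_i^*\cong\id_{\cK_i}$ as tt-functors. Therefore $\sigma_i^*(f)\cong\sigma_i^*\pi_i^*(f_i)\cong f_i$, so the hypothesis that $\sigma_i^*(f)$ is $\otimes$-nilpotent says exactly that $f_i\potimes{s}=0$ in~$\cK_i$ for some~$s\gg1$. Applying the tt-functor~$\pi_i^*$, which is additive and monoidal and hence preserves zero morphisms and tensor powers, yields $f\potimes{s}\cong\pi_i^*(f_i\potimes{s})=\pi_i^*(0)=0$. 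Thus $f$ is $\otimes$-nilpotent, and the family $\{\sigma_i^*\colon\cK\to\cK_i\}$ jointly detects $\otimes$-nilpotence. For the ``in particular'' part, assume $\cK$ rigid: a family detecting $\otimes$-nilpotence of all morphisms \emph{a fortiori} detects it on the morphisms that are $\otimes$-nilpotent on their cones, so condition~\ref{it:dense-patch.nilpotent} of \Cref{Thm:dense-patch} holds for $\{\sigma_i^*\}$; by the equivalence \ref{it:dense-patch.nilpotent}$\Leftrightarrow$\ref{it:dense-patch.dense} in that theorem, the subset $\cup_i\Img(\Spc(\sigma_i^*))=\cup_i\Img(\sigma_i)\subseteq\SpcK$ is patch-dense.

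I do not anticipate a genuine obstacle here; the argument is short and formal. The only point deserving a moment of care is the descent of the morphism~$f$ — and not merely of its source and target — to a finite stage of the colimit, which one arranges by enlarging the index. Once that is granted, everything reduces to the trivial facts that $\sigma_i^*\circ\pi_i^*\cong\id$ and that tt-functors send zero morphisms to zero morphisms.
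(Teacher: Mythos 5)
Your proof is correct and follows essentially the same route as the paper: descend $f$ to a finite stage $f_i$ of the directed colimit, use the retraction $\sigma_i^*\circ\pi_i^*\cong\id$ to identify $\sigma_i^*(f)$ with $f_i$, conclude nilpotence of $f$ by applying $\pi_i^*$, and invoke \Cref{Thm:dense-patch} for patch-density. The only cosmetic difference is that you spell out the step ``$f_i$ nilpotent $\Rightarrow$ $f=\pi_i^*(f_i)$ nilpotent'' which the paper leaves implicit.
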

\begin{proof}
Let $f\colon k\to \ell$ be a morphism in $\cK$ such that $\sigma_i^*(f)$ is $\otimes$-nilpotent for all~$i$.
We may choose $i$ such that $f=\pi_i^*(f')$ for some $f'\colon k'\to \ell'$ in $\cK_i$.
But then $f'=\sigma_i^*\circ\pi_i^*(f')=\sigma_i^*(f)$ is $\otimes$-nilpotent hence so is $f$.
The second statement follows from \Cref{Thm:dense-patch}.
\end{proof}

We now discuss some examples of this result.

\subsection{Profinite equivariance}

\begin{Exa}
\label{Exa:SH(G)}
Let $G$ be a profinite group.
The tt-category of finite genuine $G$-spectra is the directed colimit
\[
\SH(G)^c=\colim_N\SH(G/N)^c
\]
where $N\normaleq G$ runs through open normal subgroups, and the transition is given by inflation functors, see~\cite{balchin2024profiniteequivariantspectratensortriangular}.
For any such $N\normaleq G$, consider the geometric fixed points functor $\Phi^N\colon\SH(G)^c\to\SH(G/N)^c$ which gives a retraction to inflation.
It follows from (the easy part of) \Cref{Cor:dense-limit} that the geometric fixed points functors jointly detect $\otimes$-nilpotence.\footnote{\,In fact, it is known~\cite[Corollary~8.7]{balchin2024profiniteequivariantspectratensortriangular} that the geometric fixed points functors $\{\Phi^H\}_{H\leq G}$ (running through closed subgroups $H$) jointly detect $\otimes$-nilpotence for morphisms $f\colon k\to L$ of genuine $G$-spectra in which only $k$ is assumed compact.}
The geometric fixed points for closed subgroups induce a bijection~\cite[Proposition~7.4]{balchin2024profiniteequivariantspectratensortriangular}
\[
\subname(G)/G\times\Spc(\SH^c)\xto{\sim}\Spc(\SH(G)^c).
\]
On the other hand, we deduce from the second part of \Cref{Cor:dense-limit} that the subset
\[
\subname^{\textup{open}}(G)/G\times\Spc(\SH^c) \subseteq \Spc(\SH(G)^c)
\]
corresponding to \emph{open} subgroups is already patch-dense.
(One could combine this with \Cref{Exa:SH} to exhibit an even smaller patch-dense subset.)
Note that this subset is directly linked to the tt-geometry of equivariant spectra for \emph{finite} groups.
\end{Exa}

\begin{Exa}
\label{Exa:DPerm}
We continue to denote by~$G$ a profinite group.
Let $k$ be a field of characteristic~$p$.
The tt-category of compact derived permutation modules is the directed colimit
\[
\DPerm(G;k)^c=\colim_{N\normaleq G}\DPerm(G/N;k)^c
\]
along the inflation functors, see~\cite{balmer-gallauer:Dperm} or~\cite{MR4866349}.
In this case, it is the \emph{modular fixed points} $\Psi^N\colon \DPerm(G)\to\DPerm(G/N)$ of~\cite{balmer-gallauer:TTG-Perm} that yield retractions to inflation.
The spectrum admits a set-theoretic stratification
\begin{equation}
\label{eq:Spc-DPerm}
\Spc(\DPerm(G)^c)=\coprod_{H\leq G}\Spc(\Db(k(\WGH)))
\end{equation}
where $\WGH=N_G(H)/H$ is the Weyl group.
It follows again from \Cref{Cor:dense-limit} that a patch-dense subset is given by the strata in~\eqref{eq:Spc-DPerm} for $H\leq G$ \emph{open}.
\end{Exa}

\subsection{Support varieties}
\label{sec:support-varieties}

\begin{Exa}
Let $G$ again be a profinite group, and $k$ a field of characteristic~$p$.
The bounded derived category of finite-dimensional $k$-linear (discrete) $G$-representations is the directed colimit
\[
\Db(\mathrm{mod}(G;k))=\colim_{N\normaleq G}\Db(\mathrm{mod}(G/N;k))
\]
along inflation.
It follows easily from the case of finite groups~\cite{benson-carlson-rickard:tt-class-stab(kG)} that the spectrum is an invariant of the cohomology algebra:
\begin{equation}
\label{eq:Spc-Db}
\Spc(\Db(\mathrm{mod}(G;k)))=\Spech(\Hm^{\bullet}(G;k)),
\end{equation}
see~\cite[Proposition~6.5]{gallauer:tt-dtm-algclosed}.
Here we do not have a retraction $\Db(\mathrm{mod}(G;k))\to\Db(\mathrm{mod}(G/N;k))$ to inflation in general so this does not fit in the framework of \Cref{sec:limits}.
Nevertheless, there is another natural family of functors that jointly detects $\otimes$-nilpotence.
At the level of cohomology algebras the statement is that the ring maps
\[
\Res_E\colon\Hm^\bullet(G;k)\to\Hm^\bullet(E;k)
\]
for \emph{finite} elementary abelian $p$-subgroups $E\leq G$ detects which elements are nilpotent, see~\cite[Proposition~8.7]{MR1411658} or~\cite[Theorem~1]{MR2051117}.
We now upgrade this to a categorical statement.
\end{Exa}

\begin{Prop}
Let $G$ be a profinite group. The family of restriction functors
\[
\Res_E\colon\Db(\mathrm{mod}(G;k))\to\Db(\mathrm{mod}(E;k))
\]
to finite elementary abelian $p$-subgroups $E\leq G$ jointly detects $\otimes$-nilpotence.
Hence their images on spectra cover a patch-dense subset
\[
\cup_E\Img(\Spc(\Res_E))\subseteq\Spc(\Db(\mathrm{mod}(G;k))).
\]
\end{Prop}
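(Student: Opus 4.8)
The plan is to deduce this from \Cref{Thm:dense-patch}, so the only real work is to verify hypothesis~\ref{it:dense-patch.nilpotent}: that the restriction functors $\Res_E$ to finite elementary abelian $p$-subgroups jointly detect $\otimes$-nilpotence of a morphism $f\colon k\to \ell$ in $\Db(\mathrm{mod}(G;k))$ that is $\otimes$-nilpotent on its cone. Once that is in hand, $\Db(\mathrm{mod}(G;k))$ being rigid, \Cref{Thm:dense-patch}\,\ref{it:dense-patch.nilpotent}$\To$\ref{it:dense-patch.dense} immediately yields patch-density of $\cup_E\Img(\Spc(\Res_E))$. (In fact one can be more generous and detect $\otimes$-nilpotence of \emph{all} morphisms, not just those nilpotent on their cones; the weaker hypothesis suffices for the theorem.)

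First I would reduce to the finite case. A morphism $f\colon k\to \ell$ in $\Db(\mathrm{mod}(G;k))=\colim_{N\normaleq G}\Db(\mathrm{mod}(G/N;k))$ is defined at some finite stage, i.e.\ $f=\Infl_{G/N}^{G}(f')$ for some open normal $N\normaleq G$ and some $f'\colon k'\to \ell'$ in $\Db(\mathrm{mod}(G/N;k))$. Since inflation is a tt-functor it preserves $\otimes$-nilpotence in both directions on the relevant objects (the cone of $f$ is the inflation of the cone of $f'$), so $f$ is $\otimes$-nilpotent if and only if $f'$ is, and similarly after further restriction. Hence it suffices to detect $\otimes$-nilpotence of $f'$ in $\Db(\mathrm{mod}(G/N;k))$ using the restriction functors to finite elementary abelian $p$-subgroups of the finite group $\bar G:=G/N$.

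The heart of the argument is then the finite-group case. Here I would invoke the classification of thick $\otimes$-ideals of $\Db(\mathrm{mod}(\bar G;k))$ via $\Spc(\Db(\mathrm{mod}(\bar G;k)))=\Spech(\Hm^{\bullet}(\bar G;k))$ from~\eqref{eq:Spc-Db} together with the fact~\cite[Proposition~7.4]{balchin2024profiniteequivariantspectratensortriangular}-style that $\Spc(\Res_E)$ is the map on homogeneous spectra induced by $\Res_E\colon\Hm^\bullet(\bar G;k)\to\Hm^\bullet(E;k)$. The classical Quillen-type result -- $\cap_E\Ker(\Res_E)$ consists of nilpotent elements, as in~\cite[Proposition~8.7]{MR1411658} or~\cite[Theorem~1]{MR2051117} -- says exactly that $\cup_E\Img(\Spech(\Res_E))$ is patch-dense in $\Spech(\Hm^{\bullet}(\bar G;k))$ (for a graded-commutative ring, the complement of the nilradical's vanishing locus being everything is the statement that this image meets every basic constructible of $\Spech$). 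Applying \Cref{Thm:dense-patch} in the reverse direction, \ref{it:dense-patch.dense}$\To$\ref{it:dense-patch.nilpotent}, to the finite rigid tt-category $\Db(\mathrm{mod}(\bar G;k))$ gives precisely that the $\Res_E$ jointly detect $\otimes$-nilpotence (even of arbitrary morphisms, certainly of those nilpotent on their cones).

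The main obstacle is the bookkeeping in matching the tt-theoretic statement to the ring-theoretic one: one must be careful that "finite elementary abelian $p$-subgroup of $G$" and "finite elementary abelian $p$-subgroup of $G/N$" interact correctly -- every elementary abelian $p$-subgroup of $G/N$ lifts to (the image of) one in $G$ since such subgroups are finite and $G\to G/N$ has the lifting property for finite $p$-subgroups, or more simply one enlarges the index set harmlessly -- and that the identification of $\Spc(\Res_E)$ with $\Spech(\Res_E)$ under~\eqref{eq:Spc-Db} is the one in the cited references. Everything else is a formal consequence of \Cref{Thm:dense-patch} and the colimit description of $\Db(\mathrm{mod}(G;k))$. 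I would write:

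\begin{proof}
By~\eqref{eq:Spc-Db} and rigidity of $\Db(\mathrm{mod}(G;k))$, it suffices by \Cref{Thm:dense-patch}\,\ref{it:dense-patch.dense}$\To$\ref{it:dense-patch.nilpotent} to check that the functors $\Res_E$ jointly detect $\otimes$-nilpotence; the patch-density of $\cup_E\Img(\Spc(\Res_E))$ then follows from the reverse implication \ref{it:dense-patch.nilpotent}$\To$\ref{it:dense-patch.dense} of the same theorem.

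So let $f\colon k\to \ell$ be a morphism in $\Db(\mathrm{mod}(G;k))$ with $\Res_E(f)$ $\otimes$-nilpotent for every finite elementary abelian $p$-subgroup $E\leq G$. Writing $\Db(\mathrm{mod}(G;k))=\colim_{N\normaleq G}\Db(\mathrm{mod}(G/N;k))$, choose an open normal $N\normaleq G$ and $f'\colon k'\to \ell'$ in $\Db(\mathrm{mod}(G/N;k))$ with $f=\Infl_{G/N}^G(f')$. For a finite elementary abelian $p$-subgroup $\bar E\leq G/N$, pick a finite $p$-subgroup $E\leq G$ mapping isomorphically onto $\bar E$; then $\Res_E(f)=\Infl(\Res_{\bar E}(f'))$, so $\Res_{\bar E}(f')$ is $\otimes$-nilpotent, as inflation is a faithful-on-nilpotence tt-functor. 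Now $\Db(\mathrm{mod}(G/N;k))$ is an essentially small rigid tt-category, and by~\eqref{eq:Spc-Db} and the identification of $\Spc(\Res_{\bar E})$ with the map on $\Spech$ induced by $\Res_{\bar E}\colon \Hm^{\bullet}(G/N;k)\to\Hm^{\bullet}(\bar E;k)$, the classical detection-of-nilpotence statement~\cite[Proposition~8.7]{MR1411658}, \cite[Theorem~1]{MR2051117} says exactly that $\cup_{\bar E}\Img(\Spc(\Res_{\bar E}))$ is patch-dense in $\Spc(\Db(\mathrm{mod}(G/N;k)))$. By \Cref{Thm:dense-patch}\,\ref{it:dense-patch.dense}$\To$\ref{it:dense-patch.nilpotent} the functors $\Res_{\bar E}$ jointly detect $\otimes$-nilpotence in $\Db(\mathrm{mod}(G/N;k))$; hence $f'$ is $\otimes$-nilpotent, and therefore so is $f=\Infl(f')$.
\end{proof}
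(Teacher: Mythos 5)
Your reduction to the finite case contains a genuine gap, and it sits exactly where the paper has to work hardest. You claim that for each finite elementary abelian $p$-subgroup $\bar E\leq G/N$ one can ``pick a finite $p$-subgroup $E\leq G$ mapping isomorphically onto $\bar E$'' (and, in your sketch, that $G\to G/N$ ``has the lifting property for finite $p$-subgroups''). This is false for profinite groups: take $G=\mathbb{Z}_p$ and $N=p\mathbb{Z}_p$, so that $G/N\cong C_p$ is elementary abelian while $G$ is torsion-free and has no non-trivial finite subgroups whatsoever. In such a situation the elementary abelian subgroups of $G/N$ are simply not images (let alone isomorphic lifts) of finite elementary abelian subgroups of $G$, so your hypothesis ``$\Res_E(f)$ nilpotent for all finite elementary abelian $E\leq G$'' gives you no control over $\Res_{\bar E}(f')$ at the level $G/N$, and the argument stops there. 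This is precisely why the paper does not try to detect nilpotence of $g$ (your $f'$) in $\Db(\mathrm{mod}(G/N;k))$ itself: it instead invokes Minh--Symonds \cite[Proposition~1]{MR2051117} to find a deeper open normal subgroup $N'\normaleq N$ with the property that every elementary abelian $p$-subgroup $F'\leq G/N'$ has image $F=F'N/N$ in $G/N$ of the form $EN/N$ for a \emph{finite} elementary abelian $E\leq G$; it then suffices to prove nilpotence of $\Infl^{G/N}_{G/N'}(g)$, which by Quillen for the finite group $G/N'$ reduces to nilpotence of $\Res_F(g)$, and finally the discrepancy between $E$ and its image $F$ (the map $E\twoheadrightarrow F$ need not be injective) is handled by the observation that inflation along a surjection of elementary abelian groups is faithful because such surjections split. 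None of these three ingredients (passage to $N'$, the Minh--Symonds lifting-up-to-image statement, the splitness/faithfulness argument) appears in your proof, and the first two cannot be dispensed with.

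Your treatment of the finite-group step itself is fine in substance: detecting $\otimes$-nilpotence in $\Db(\mathrm{mod}(\bar G;k))$ for a finite group $\bar G$ by restriction to its elementary abelian $p$-subgroups can indeed be obtained by running \Cref{Thm:dense-patch} backwards, using \eqref{eq:Spc-Db}, the identification of $\Spc(\Res_{\bar E})$ with $\Spech$ of restriction on cohomology, and Quillen stratification (the images are even all of the spectrum, since the restriction maps are finite by Evens--Venkov, so their images are closed and their union is everything). The paper compresses this to the word ``Quillen'', so that part is a harmless variant. But the profinite-to-finite reduction is the heart of the proposition, and as written yours is broken.
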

\begin{proof}
Let $f$ be a morphism in $\Db(\mathrm{mod}(G;k))$ such that $\Res_E(f)$ is $\otimes$-nilpotent for all finite elementary abelian $p$-subgroups $E\leq G$.
There is an open normal subgroup $N\normaleq G$ such that $f$ is inflated from some $g$ in~$\Db(\mathrm{mod}(G/N;k))$.
By~\cite[Proposition~1]{MR2051117}, there exists $N'\normaleq N$ another open normal subgroup such that for every elementary abelian $p$-subgroup $F'\leq G/N'$ its image $F:=F'N/N$ in $G/N$ is the image of a finite elementary abelian $p$-subgroup~$E$ of~$G$, that is, $F=EN/N$.
For each such $F'$ we have
\begin{equation}
\label{eq:rho-infl}
\Res^{G/N'}_{F'}\Infl^{G/N}_{G/N'}(g)=\Infl_{F'}^{F}\Res^{G/N}_F(g).
\end{equation}
To prove that $f$ is $\otimes$-nilpotent it suffices to show $\Infl^{G/N}_{G/N'}(g)$ is, hence by~\eqref{eq:rho-infl} and Quillen, that $\Res^{G/N}_F(g)=\Res^{G/N}_{EN/N}(g)$ is (for all~$F'$).
While inflation typically is not faithful, it is along surjections between elementary abelian groups because the latter are split.
Hence it suffices to show $\otimes$-nilpotence of
\[
\Infl_{E}^{EN/N}\Res^{G/N}_{EN/N}(g)=\Res_E^G\Infl_{G}^{G/N}(g)=\Res_E^G(f),
\]
which was exactly our assumption.
\end{proof}

\begin{Exa}
Let $G=(C_p)^{\mathbb{N}}$ be a countably infinite pro-elementary abelian group, cf.~\cite[Example~6.10]{MR4866349}.
Then $\Spc(\Db(\mathrm{mod}(G;k)))=\overline{\mathbb{P}}_k^\infty$ is an infinite-dimensional projective space, together with a unique closed point attached.
The patch-dense subset $\cup_E\Img(\Spc(\rho_E))$ is given by $\cup_n\overline{\mathbb{P}}_k^n$, those points with only finitely many `non-zero coordinates'.
\end{Exa}



\begin{thebibliography}{BCR97}

\bibitem[Bal05]{balmer:spectrum}
Paul Balmer.
\newblock The spectrum of prime ideals in tensor triangulated categories.
\newblock {\em J. Reine Angew. Math.}, 588:149--168, 2005.

\bibitem[Bal10]{balmer:sss}
Paul Balmer.
\newblock Spectra, spectra, spectra - tensor triangular spectra versus
  {Z}ariski spectra of endomorphism rings.
\newblock {\em Algebraic and Geometric Topology}, 10(3):1521--63, 2010.

\bibitem[Bal18]{balmer:surjectivity}
Paul Balmer.
\newblock On the surjectivity of the map of spectra associated to a
  tensor-triangulated functor.
\newblock {\em Bull. Lond. Math. Soc.}, 50(3):487--495, 2018.

\bibitem[BBB24]{balchin2024profiniteequivariantspectratensortriangular}
Scott Balchin, David Barnes, and Tobias Barthel.
\newblock Profinite equivariant spectra and their tensor-triangular geometry,
  2024.

\bibitem[BCR97]{benson-carlson-rickard:tt-class-stab(kG)}
D.~J. Benson, Jon~F. Carlson, and Jeremy Rickard.
\newblock Thick subcategories of the stable module category.
\newblock {\em Fund. Math.}, 153(1):59--80, 1997.

\bibitem[BF11]{balmer-favi:idempotents}
Paul Balmer and Giordano Favi.
\newblock Generalized tensor idempotents and the telescope conjecture.
\newblock {\em Proc. Lond. Math. Soc. (3)}, 102(6):1161--1185, 2011.

\bibitem[BG23a]{balmer-gallauer:Dperm}
Paul Balmer and Martin Gallauer.
\newblock Permutation modules, {M}ackey functors, and {A}rtin motives.
\newblock In {\em Representations of algebras and related structures}, EMS Ser.
  Congr. Rep., pages 37--75. EMS Press, Berlin, 2023.

\bibitem[BG23b]{balmer-gallauer:TTG-Perm}
Paul Balmer and Martin Gallauer.
\newblock {T}he geometry of permutation modules.
\newblock Preprint available on authors' webpages, 2023.

\bibitem[BG25]{MR4866349}
Paul Balmer and Martin Gallauer.
\newblock The spectrum of {A}rtin motives.
\newblock {\em Trans. Amer. Math. Soc.}, 378(3):1733--1754, 2025.

\bibitem[BHS23]{barthel-heard-sanders:stratification-Mackey}
T.~Barthel, D.~Heard, and B.~Sanders.
\newblock Stratification in tensor triangular geometry with applications to
  spectral {M}ackey functors.
\newblock {\em Camb. J. Math.}, 11(4):829--915, 2023.

\bibitem[DHS88]{devinatz-hopkins-smith:chromatic}
Ethan~S. Devinatz, Michael~J. Hopkins, and Jeffrey~H. Smith.
\newblock Nilpotence and stable homotopy theory. {I}.
\newblock {\em Ann. of Math. (2)}, 128(2):207--241, 1988.

\bibitem[DST19]{dst:spectral-spaces}
Max Dickmann, Niels Schwartz, and Marcus Tressl.
\newblock {\em Spectral spaces}, volume~35 of {\em New Mathematical
  Monographs}.
\newblock Cambridge University Press, Cambridge, 2019.

\bibitem[Gal18]{gallauer:tt-fmod}
Martin Gallauer.
\newblock Tensor triangular geometry of filtered modules.
\newblock {\em Algebra Number Theory}, 12(8):1975--2003, 2018.

\bibitem[Gal19]{gallauer:tt-dtm-algclosed}
Martin Gallauer.
\newblock tt-geometry of {T}ate motives over algebraically closed fields.
\newblock {\em Compos. Math.}, 155(10):1888--1923, 2019.

\bibitem[Gó25]{Gomez:fiberwise}
Juan~Omar Gómez.
\newblock Fiberwise building and stratification in tensor triangular geometry,
  2025.

\bibitem[Hoc69]{hochster:prime-ideal-structure}
M.~Hochster.
\newblock Prime ideal structure in commutative rings.
\newblock {\em Trans. Amer. Math. Soc.}, 142:43--60, 1969.

\bibitem[HS98]{hopkins-smith:chromatic}
Michael~J. Hopkins and Jeffrey~H. Smith.
\newblock Nilpotence and stable homotopy theory. {II}.
\newblock {\em Ann. of Math. (2)}, 148(1):1--49, 1998.

\bibitem[ML98]{MacLane:cats-working}
Saunders Mac~Lane.
\newblock {\em Categories for the working mathematician}, volume~5 of {\em
  Graduate Texts in Mathematics}.
\newblock Springer-Verlag, New York, second edition, 1998.

\bibitem[MS04]{MR2051117}
Pham~Anh Minh and Peter Symonds.
\newblock Cohomology and finite subgroups of profinite groups.
\newblock {\em Proc. Amer. Math. Soc.}, 132(6):1581--1588, 2004.

\bibitem[Sch96]{MR1411658}
Claus Scheiderer.
\newblock Farrell cohomology and {B}rown theorems for profinite groups.
\newblock {\em Manuscripta Math.}, 91(2):247--281, 1996.

\end{thebibliography}


\end{document}